\newtheorem{theorem}{Theorem}[section]
\newtheorem{lemma}[theorem]{Lemma}
\newtheorem{proposition}[theorem]{Proposition}
\newtheorem{remark}[theorem]{Remark}
\numberwithin{equation}{section}
\begin{document}

\baselineskip=15.5pt

\title{On the logarithmic connections over curves}

\author[I. Biswas]{Indranil Biswas}

\address{School of Mathematics, Tata Institute of Fundamental
Research, Homi Bhabha Road, Bombay 400005, India}

\email{indranil@math.tifr.res.in}

\author[V. Heu]{Viktoria Heu}

\address{IRMA, UMR 7501, 7 rue Ren\'e-Descartes, 67084 Strasbourg Cedex, France}

\email{heu@math.unistra.fr}

\subjclass[2000]{14H60, 14F05}

\keywords{Logarithmic connection, residue, moduli space, Higgs bundle}

\date{}

\begin{abstract}
We study two different actions on the moduli spaces of logarithmic connections 
over smooth complex projective curves. Firstly, we establish a dictionary 
between logarithmic orbifold connections and parabolic logarithmic connections 
over the quotient curve. Secondly, we prove that fixed points on the moduli 
space of connections under the action of finite order line bundles are exactly 
the push-forward of logarithmic connections on a certain unramified Galois 
cover of the base curve. In the coprime case, this action of finite 
order line bundles on the moduli space is cohomologically trivial.
\end{abstract}

\maketitle

\section{Introduction}

There is a well-established dictionary between the orbifold 
vector bundles, \emph{i.e.,} holomorphic vector bundles endowed with an action 
of a finite subgroup $\Gamma$ of automorphisms of a smooth complex projective 
base curve $Y$, and the parabolic vector bundles over the 
quotient curve $X\,=\,Y/\Gamma$. In the first part of this paper, we 
generalize this to a dictionary between the orbifold vector bundles $E 
\longrightarrow Y$ endowed with a $\Gamma$--equivariant logarithmic connection 
and the vector bundles $V\longrightarrow X$ endowed with a parabolic logarithmic 
connection $D'$. The $\Gamma$--equivariance property of $D$ means that the 
action of $\Gamma$ on locally
defined sections preserves the property of being flat with respect to 
$D$. By parabolic logarithmic connection we mean that
\begin{itemize}
\item the parabolic structure 
on $V$ induced by $D'$ coincides with the parabolic structure on $V$ induced by 
the orbifold vector bundle $E$, and

\item the residue splits the quasiparabolic filtration.
\end{itemize} 
We establish the following theorem in Section \ref{pareq}.

\begin{theorem}
Let $Y$ be a compact connected Riemann surface. Let $\Gamma$ be a finite 
subgroup of $\mathrm{Aut}(Y).$ Let $S_Y$ be a finite subset of $Y$ such that 
$\Gamma$ acts freely on $S_Y$. Let $(E,D)$ be a logarithmic connection on $Y$,
with singular set included in $S_Y$, equipped with a lift of the 
tautological action of $\Gamma$. Let $\phi$ be the quotient map 
$Y\,\longrightarrow\, X \,:=\,Y/\Gamma$. Then the push-forward connection 
$\phi_*D$ induces a logarithmic connection $D'$ on the $\Gamma$-invariant 
sub-vector bundle $V\,=\,(\phi_*E)^\Gamma$ of $\phi_*E$. This connection 
preserves the natural quasiparabolic filtration $$V_x=V_x^0\supset V_x^1 \supset 
V_x^2 \subset \ldots \supset V_x^{\ell_x} \supset V_x^{\ell_x+1}= 0$$
for each ramification point $x \in X$ of the map $\phi$, and the parabolic 
weights associated to this filtration coincide with the eigenvalues of the 
endomorphism of $V_x^i / V_x^{i+1}$ induced by the residue of the connection.
Furthermore, the residue of the logarithmic connection $D'$ at any parabolic 
point splits completely the quasiparabolic filtration.
\end{theorem}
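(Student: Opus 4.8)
\emph{Strategy.} The plan is to localize the statement at the ramification points of $\phi$: away from them $\phi$ is étale, the pull-back of $V$ is canonically $E$, and $D'$ is simply the descent of $D$ --- a logarithmic connection with polar locus in $\phi(S_Y)$ --- so nothing new happens there, and at a point of $\phi(S_Y)$ over which $\phi$ is unramified the natural quasiparabolic filtration is trivial and the corresponding assertions are vacuous. So fix a ramification point $x=\phi(y)$. Its stabilizer $\Gamma_y$ acts faithfully on $T_yY$, hence is cyclic, of some order $N$, generated by an element $\sigma$; the $\Gamma$-orbit of $y$ produces $|\Gamma|/N$ discs in $Y$ lying over a disc $\Delta\ni x$, which $\Gamma$ permutes, so $(\phi_*E)^\Gamma|_\Delta$ depends only on one such disc together with its $\Gamma_y$-action. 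Because $\Gamma$ acts freely on $S_Y$, no ramification point lies in $S_Y$, so $D$ is a \emph{holomorphic} connection near $y$. I am thus reduced to the situation where $\Gamma_y=\langle\sigma\rangle$ (cyclic of order $N$) acts on a disc with coordinate $w$ by $\sigma\colon w\mapsto\zeta w$, $\zeta=e^{2\pi i/N}$, with quotient coordinate $z=w^{N}$; $(E,D)$ is a $\Gamma_y$-equivariant bundle with holomorphic connection on that disc; and $V=(\pi_*E)^{\Gamma_y}$, where $\pi\colon w\mapsto z$.

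Next I would put $(E,D)$ into a normal form. By the standard linearization of a finite cyclic action on a bundle over a disc fixing the central point (an averaging argument), there is a holomorphic frame $e_1,\dots,e_r$ of $E$ with $\sigma\cdot e_k=\zeta^{a_k}e_k$ for some $a_k\in\{0,\dots,N-1\}$. A local section $\sum_k f_k(w)e_k$ of $E$ is $\Gamma_y$-invariant if and only if $f_k(\zeta w)=\zeta^{a_k}f_k(w)$, i.e.\ $f_k$ is $w^{a_k}$ times a holomorphic function of $z$; hence $\tilde e_k:=w^{a_k}e_k$ is a frame of $V$ over $\Delta$, and --- by the definition of the parabolic structure induced by the orbifold bundle --- the graded pieces of the quasiparabolic filtration of $V_x$ are spanned by the vectors $\tilde e_k(x)$ sharing a common value of the weight $a_k/N$. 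Writing $De_k=\sum_j\Omega_{jk}(w)\,dw\,e_j$ with the $\Omega_{jk}$ holomorphic, $\Gamma_y$-equivariance of $D$ forces $\Omega_{jk}(\zeta w)=\zeta^{a_j-a_k-1}\Omega_{jk}(w)$, so $\Omega_{jk}$ is $w^{m_{jk}}$ times a holomorphic function of $z$, where $m_{jk}\in\{0,\dots,N-1\}$ and $m_{jk}\equiv a_j-a_k-1\pmod N$.

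Now I would just compute $D'$ in the frame $(\tilde e_k)$, using $dw=\tfrac1N w^{1-N}\,dz$ and $w^{-N}=z^{-1}$. The contribution of $d(g_kw^{a_k})\,e_k$ equals $dg_k\otimes\tilde e_k+\tfrac{a_k}{N}\,g_k\,\tfrac{dz}{z}\otimes\tilde e_k$, while the contribution of $g_kw^{a_k}De_k$, rewritten in the frame $(\tilde e_j)$, acquires a power of $w$ that is $\equiv 0\pmod N$ and equals $-N$ --- a genuine simple pole, feeding into the residue --- exactly when $a_j>a_k$, and is $\ge 0$ --- holomorphic, no pole --- otherwise. This establishes at once that $\phi_*D$ restricts to a logarithmic connection $D'$ on $V$ whose polar divisor is contained in $\phi(S_Y)$ plus the ramification divisor, and that in the frame $(\tilde e_k)$ the residue $R:=\mathrm{Res}_x(D')$ is triangular for the weight ordering, with diagonal entries the weights $a_k/N$ and off-diagonal part only raising the weight. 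Reading this off: $R$ preserves the quasiparabolic filtration $V_x^0\supseteq V_x^1\supseteq\cdots\supseteq V_x^{\ell_x}\supseteq V_x^{\ell_x+1}=0$, and the endomorphism it induces on each graded piece $V_x^i/V_x^{i+1}$ is multiplication by the associated parabolic weight $\beta_i$ --- which are precisely the assertions that $D'$ preserves the filtration and that the weights coincide with the residue-eigenvalues on the graded pieces.

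Finally I would upgrade ``preserves'' to ``splits completely''. Since $R$ stabilizes the filtration and acts on $V_x^i/V_x^{i+1}$ as the scalar $\beta_i$, each $R-\beta_i\,\mathrm{Id}$ maps $V_x^i$ into $V_x^{i+1}$, hence $\prod_{i=0}^{\ell_x}(R-\beta_i\,\mathrm{Id})$ annihilates $V_x=V_x^0$; as the $\beta_i$ are pairwise distinct, the minimal polynomial of $R$ has simple roots and $R$ is semisimple. Its eigenspace decomposition $V_x=\bigoplus_i\ker(R-\beta_i\,\mathrm{Id})$ is then adapted to the filtration: $R|_{V_x^j}$ is semisimple with spectrum contained in $\{\beta_j,\dots,\beta_{\ell_x}\}$, the multiplicity of $\beta_i$ --- both globally in $V_x$ and inside $V_x^j$ when $i\ge j$ --- equals $\dim V_x^i/V_x^{i+1}$, and a dimension count then forces $V_x^j=\bigoplus_{i\ge j}\ker(R-\beta_i\,\mathrm{Id})$ for every $j$. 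That is exactly the statement that the residue of $D'$ splits the quasiparabolic filtration completely. The one genuinely delicate point in this plan is the equivariant local normal form together with the exponent bookkeeping in the computation of $D'$ that makes it visibly logarithmic with visibly triangular residue; granting that, the remaining assertions are formal linear algebra.
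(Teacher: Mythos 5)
Your proposal is correct, but it reaches the key statements by a different route than the paper. For the existence of $D'$ the paper argues sheaf--theoretically: the differential $d\phi$ identifies $K_Y\otimes\mathcal{O}_Y(S_Y)$ with a subsheaf of $\phi^*(K_X\otimes\mathcal{O}_X(\Delta))$, so composing $D$ with this inclusion, pushing forward, using the projection formula and taking $\Gamma$--invariants gives the logarithmic connection on $V=(\phi_*E)^\Gamma$ with no coordinate computation; you instead obtain logarithmicity from the explicit exponent bookkeeping in the frame $\tilde e_k=w^{a_k}e_k$. More significantly, at a ramification point the paper does not work in an arbitrary equivariant frame: since $D$ is nonsingular near $y$, it uses parallel transport to extend the $\Gamma_y$--isotypical decomposition of $E_y$ to a $D$--flat, $\Gamma_y$--invariant decomposition $E|_U=\bigoplus_\chi F^\chi$, which descends to a local decomposition of the parabolic bundle $V_*$ into pieces with a single weight each; this immediately gives a complete splitting of $V_x$ preserved by $\mathrm{Res}(D',x)$, and the value of the residue on each piece is computed from the rank--one local model $((f_n)_*\mathcal{O}_{\mathbb D})^{\mathrm{Gal}(f_n)}$ with the de Rham connection. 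Your argument replaces this flat decomposition by an arbitrary equivariant linearization, accepts that the residue is then only triangular with respect to the weight filtration (diagonal entries the weights, off--diagonal part weight--increasing), and recovers the complete splitting a posteriori by linear algebra: the minimal polynomial divides $\prod_i(X-\beta_i)$ with distinct $\beta_i$, so the residue is semisimple and its eigenspace decomposition is adapted to the filtration by the dimension count you give. Both arguments are sound; the paper's parallel--transport decomposition is coordinate--free and produces the splitting from an actual local decomposition of the parabolic bundle (hence is what generalizes to the converse construction in Section 2.2), while your computation is more elementary and self--contained, at the price of relying on the standard equivariant local normal form (averaging) and on careful exponent arithmetic, and of producing the splitting only at the level of the fibre $V_x$.
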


In Section \ref{eqpar}, we carry out a reverse construction from parabolic 
connections on $X$ to $\Gamma$--equivariant connections on a certain cover 
$Y\,\longrightarrow\,X$ with $X\,=\,Y/\Gamma$.

This work has been motivated by the fact that another result about certain 
group actions on the moduli space of semi-stable vector bundles can be 
generalized to the corresponding action on the moduli space of logarithmic 
connections. To explain this, consider the action by tensor product of the
holomorphic line bundle $L$ of order $n$ on the moduli space 
$\mathcal{M}(r,\xi)$ of 
semi-stable vector bundles of rank $r \,\in\, n\mathbb{N}$ with determinant 
line bundle $\xi$ over a Riemann surface $X$ of genus at least $2$.
The fixed points of this action 
are exactly the 
vector bundles that arise as the push-forward $(\phi_L)_*\eta$ of a line bundle 
$\eta$ over $Y_L$ such that $\mathrm{det}((\phi_L)_*\eta)\,=\,\xi$, where 
$\phi_L\,:\, Y_L\,\longrightarrow\, X$ is the unramified $n$-cover defined by 
$L$ \cite{BP} (see Section \ref{se3.1} for the details). For vector bundles 
endowed with logarithmic
connections, we prove the following theorem in Section \ref{se3.1}.

\begin{theorem}\label{thm-i2}
Let $L$ be a line bundle of order $n$ over a compact connected Riemann surface 
$X$, and let $\phi_L : Y_L \longrightarrow X$ be the unramified cover of degree 
$n$ given by $L$. Let $r$ be a multiple of $n$, and let $D_L$ be the unique 
(holomorphic) connection on $L$ that induces the trivial 
connection on $L^{\otimes n}$. Let $(E,D)$ be a logarithmic connection on $X$.
There is an isomorphism of connections $(E\, ,D)\,\simeq\,(E \otimes L\, , 
D\otimes \mathrm{Id}_L+\mathrm{Id}_E \otimes D_L)$ if and only if $(E,D)$ 
coincides with the push-forward $({\phi_L}_*V, {\phi_L}_*D_V)$ of a logarithmic 
connection $(V,D_V)$ of rank $r/n$ on $Y_L.$
\end{theorem}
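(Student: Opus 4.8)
The plan is to prove both implications by passing to the \'etale cover $\phi_L\colon Y_L\to X$ and exploiting the fact that $\phi_L^{*}(L,D_L)$ is \emph{canonically} the trivial flat line bundle on $Y_L$. I will first record this fact. By construction of the cyclic cover, $\phi_L^{*}L$ carries a tautological section $\tau$ whose $n$-th tensor power is the constant section $1$ under the natural identification $\phi_L^{*}(L^{\otimes n})\cong\mathcal{O}_{Y_L}$; writing $\phi_L^{*}D_L(\tau)=\alpha\otimes\tau$ for a holomorphic $1$-form $\alpha$ and applying the flat isomorphism $\phi_L^{*}(L^{\otimes n},D_L^{\otimes n})\cong(\mathcal{O}_{Y_L},d)$ to $(\phi_L^{*}D_L)^{\otimes n}(\tau^{\otimes n})=n\,\alpha\otimes\tau^{\otimes n}$ gives $0=d(1)=n\,\alpha$, hence $\alpha=0$, so $\tau$ identifies $(\phi_L^{*}L,\phi_L^{*}D_L)$ with $(\mathcal{O}_{Y_L},d)$. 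I will also use that the Galois group $\Gamma=\mathbb{Z}/n\mathbb{Z}$, say generated by $\gamma$, acts on $\tau$ through a primitive $n$-th root of unity $\zeta$ (again by the construction of $Y_L$).

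For the ``if'' direction I would argue as follows. Assuming $(E,D)=({\phi_L}_*V,{\phi_L}_*D_V)$, the push-forward along the \'etale map $\phi_L$ is again a logarithmic connection, and the projection formula gives an isomorphism ${\phi_L}_*V\otimes L\cong{\phi_L}_*(V\otimes\phi_L^{*}L)$; checking this over open subsets of $X$ over which $\phi_L$ is trivial shows it intertwines ${\phi_L}_*D_V\otimes\mathrm{Id}_L+\mathrm{Id}\otimes D_L$ with ${\phi_L}_*(D_V\otimes\mathrm{Id}+\mathrm{Id}\otimes\phi_L^{*}D_L)$. Trivializing $(\phi_L^{*}L,\phi_L^{*}D_L)$ by means of $\tau$ turns the latter connection into ${\phi_L}_*D_V$, so the composite isomorphism is a flat isomorphism $(E,D)\simeq(E\otimes L,D\otimes\mathrm{Id}+\mathrm{Id}\otimes D_L)$.

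For the ``only if'' direction I would start from a flat isomorphism $\theta\colon(E,D)\to(E\otimes L,D\otimes\mathrm{Id}+\mathrm{Id}\otimes D_L)$ and form $\Theta$, the composition of $\phi_L^{*}\theta\colon\phi_L^{*}E\to\phi_L^{*}E\otimes\phi_L^{*}L$ with the isomorphism $\phi_L^{*}E\otimes\phi_L^{*}L\cong\phi_L^{*}E$ coming from $\tau$. Since $\phi_L^{*}D_L(\tau)=0$, flatness of $\theta$ forces $\Theta$ to be a parallel automorphism of $(\phi_L^{*}E,\phi_L^{*}D)$; and since $\phi_L^{*}\theta$ is $\Gamma$-equivariant for the tautological structures while $\tau$ transforms by $\zeta$, the operator $\Theta$ satisfies the twisted equivariance $g_\gamma\circ\Theta\circ g_\gamma^{-1}=\zeta\,\Theta$, where $g_\gamma$ denotes the action of $\gamma$ on $\phi_L^{*}E$ and $g_\gamma^{\,n}=\mathrm{Id}$. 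Because $Y_L$ is connected and compact, $H^{0}(Y_L,\mathcal{O}_{Y_L})=\mathbb{C}$, so the eigenvalues of $\Theta$ are constants $\lambda_1,\dots,\lambda_k$, and the twisted equivariance makes this set invariant under multiplication by $\zeta$, hence a disjoint union of $\langle\zeta\rangle$-orbits each of cardinality exactly $n$ (note $0$ is not an eigenvalue). I would then take $W_i\subset\phi_L^{*}E$ to be the generalized eigenbundle of $\Theta$ for $\lambda_i$ --- a parallel subbundle, being the image of the associated spectral projector, which is a polynomial in the parallel automorphism $\Theta$ --- and observe that $g_\gamma$ permutes the $W_i$ transitively within each $\langle\zeta\rangle$-orbit. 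Summing one $W_i$ per orbit produces a parallel subbundle $V\subset\phi_L^{*}E$ with $\phi_L^{*}E=\bigoplus_{j=0}^{n-1}g_\gamma^{\,j}(V)$, where $g_\gamma$ cyclically permutes the $n$ summands and $g_\gamma^{\,n}$ restricts to $\mathrm{Id}$ on $V$; thus $(\phi_L^{*}E,\phi_L^{*}D)$ with its $\Gamma$-structure is the equivariant object induced from $(V,\phi_L^{*}D|_{V})$, and \'etale descent along $\phi_L$ identifies $(E,D)$ with $({\phi_L}_*V,{\phi_L}_*(\phi_L^{*}D|_{V}))$, of rank $n\cdot\mathrm{rk}\,V$, forcing $\mathrm{rk}\,V=r/n$.

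The routine ingredients will be the compatibility of the projection formula with connections and the \'etale descent statement for bundles equipped with a connection. The main obstacle is that for a general flat isomorphism $\theta$ the induced automorphism $\theta^{\otimes n}$ of $(E,D)$ need not be the identity, so $\Theta$ need not be semisimple and cannot simply be diagonalized; dealing with this is precisely why the argument is organized around the generalized eigenbundles of $\Theta$ and the twisted $\Gamma$-action, rather than around honest eigenbundles after a normalization of $\theta$.
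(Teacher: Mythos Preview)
Your argument is correct. The ``if'' direction and the preliminary observation that the tautological section $\tau$ (the paper's $\sigma$) is flat for $\phi_L^{*}D_L$ match the paper exactly. The ``only if'' direction, however, proceeds differently.

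The paper does not work with an arbitrary flat isomorphism $\theta$. Instead it first \emph{normalizes}: invoking divisibility of $\mathrm{Aut}(E)$ and of the group of automorphisms of $(E,D)$, it chooses a flat isomorphism $H\colon E\to E\otimes L$ whose $n$-fold iterate is the identity. With this normalization the subbundle $V\subset\phi_L^{*}E$ is defined in one line as the kernel of $\phi_L^{*}H-\mathrm{Id}\otimes\sigma$; flatness of $\sigma$ then shows $V$ is $\phi_L^{*}D$-invariant, and the identification $\phi_{L*}V\cong E$ is imported from \cite{BP}. In your language, the normalization forces $\Theta^{n}=\mathrm{Id}$, so $\Theta$ is semisimple with eigenvalues among the $n$-th roots of unity, and the paper's $V$ is simply the $1$-eigenbundle.

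Your route avoids the divisibility step entirely: you keep a general flat $\theta$, decompose $\phi_L^{*}E$ into the generalized eigenbundles of the possibly non-semisimple $\Theta$, and use the twisted equivariance $g_\gamma\Theta g_\gamma^{-1}=\zeta^{\pm 1}\Theta$ to see that $\Gamma$ permutes these eigenbundles freely in orbits of length $n$. Picking one representative per orbit assembles $V$, and the induced-bundle description gives $E\cong\phi_{L*}V$ by descent. This is a genuinely different organization: it trades the (not entirely obvious) divisibility of $\mathrm{Aut}(E,D)$ for a spectral argument, at the cost of a slightly longer construction of $V$. The paper's version is shorter once the normalization is granted; yours is more self-contained and sidesteps any subtlety about whether a flat $H$ with $H^{n}=\mathrm{Id}$ can always be found.
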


Theorem \ref{thm-i2} classifies the fixed points under the action by tensor 
product with finite order line bundles on certain moduli spaces of (holomorphic 
or) logarithmic connections. Moreover, this action is cohomologically trivial 
in a sense that will be explained below. 

Let $X$ be a compact connected Riemann surface of genus at least two. Fix a 
line bundle $\xi$ of degree $d$ over $X$ and a logarithmic connection $D_\xi$ 
on $\xi$ singular exactly over $x_0\,\in\, X$. Let $r\,\geq\, 2$ be an integer 
coprime to $d$. Denote by $\mathcal{M}(r,D_\xi)$ the (smooth) moduli space of 
logarithmic connections of rank $r$ over $X$ that are singular exactly over 
$x_0$ with residue $-\frac{d}{r}\cdot \mathrm{Id}$ such that the induced 
connection 
on the determinant line bundle coincides with $(\xi, D_\xi)$. Let $L$ be a 
line bundle of order $n \,|\,r$ over $X$. 
In Section \ref{se3.2}, we prove the following theorem.

\begin{theorem}
The homomorphism 
$$\begin{array}{rrcl}\Phi^*: & \mathrm{H}^*(\mathcal{M}(r,D_\xi), \mathbb{Q})& \longrightarrow & \mathrm{H}^*(\mathcal{M}(r,D_\xi), \mathbb{Q})\end{array}$$
induced by the automorphism $$\begin{array}{rccc}\Phi : & \mathcal{M}(r,D_\xi) & \longrightarrow &\mathcal{M}(r,D_\xi)\\
&(E,D)&\longmapsto & (E \otimes L, D\otimes \mathrm{Id}_L+\mathrm{Id}_E \otimes D_L),\end{array}$$
	is the identity map.
\end{theorem}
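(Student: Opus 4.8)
The plan is to follow the same strategy that establishes cohomological triviality for the action of finite order line bundles on moduli of semistable bundles, as in \cite{BP}, adapting it to the connection setting. The key point is that although $\Phi$ has finite order (dividing $n$), the fixed point locus is nonempty and large, so one cannot directly invoke a free quotient argument; instead one shows $\Phi$ is, in an appropriate sense, homotopic to the identity, or more precisely that it acts trivially on cohomology because it lies in the connected component of the identity of a group acting on $\mathcal{M}(r,D_\xi)$.

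First I would recall the analogue for bundles: the moduli space $\mathcal{M}(r,\xi)$ carries an action of the finite group $J[n]$ of $n$-torsion points of the Jacobian, and the induced action on rational cohomology is trivial because $J[n]$ sits inside the connected group $\mathrm{Pic}^0(X)\,=\,J$ whose action on $\mathcal{M}(r,\xi)$ is only defined up to the obstruction coming from tensoring changing the determinant; the standard fix is to pass to $\mathcal{M}(r,d)$ (varying determinant) where all of $J$ acts, note that $J$ is connected hence acts trivially on $\mathrm{H}^*(-,\mathbb{Q})$, and then descend. For connections I would set up the parallel family: consider the moduli space $\widetilde{\mathcal{M}}$ of rank $r$ logarithmic connections singular over $x_0$ with residue $-\frac{d}{r}\mathrm{Id}$ and \emph{arbitrary} determinant connection of the appropriate type, or better, a fibration over a space parametrizing the determinant data. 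Tensoring by a pair $(M, D_M)$ where $M\in\mathrm{Pic}^0(X)$ and $D_M$ is a compatible connection gives an action of the connected group $\mathrm{Pic}^\natural(X)$ (line bundles with connection, or rather the relevant torsor) on this larger moduli space, and $\Phi$ is the restriction of the action of the finite order element $(L,D_L)$ inside this connected group.

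The main steps are then: (1) exhibit a connected algebraic (or complex Lie) group $G$ acting on a moduli space $\widetilde{\mathcal{M}}$ containing $\mathcal{M}(r,D_\xi)$ as a fiber, such that the $\Phi$-action is induced by the $G$-action of a single element $g\in G$; here $g=(L,D_L)$ and $G$ should be the group of degree zero line bundles equipped with a logarithmic connection singular at $x_0$ with residue $0$, which is an extension of $J=\mathrm{Pic}^0(X)$ by $\mathrm{H}^0(X,\Omega^1_X(x_0))\cong\mathbb{C}^g$, hence connected. (2) Observe that $g^n=(L^{\otimes n}, D_L^{\otimes n})=(\mathcal{O}_X,\text{trivial})$ is the identity of $G$, using precisely the defining property of $D_L$ from Theorem \ref{thm-i2}. (3) Since $G$ is connected, the action of any $g\in G$ on $\mathrm{H}^*(\widetilde{\mathcal{M}},\mathbb{Q})$ is trivial (the action of a connected group on rational cohomology of a space on which it acts is trivial, because cohomology classes are rigid under deformation and $G$ is path-connected). (4) Restrict to the fiber $\mathcal{M}(r,D_\xi)$: because the coprimality of $r$ and $d$ makes $\mathcal{M}(r,D_\xi)$ smooth and makes $\widetilde{\mathcal{M}}\to(\text{determinant data})$ a fiber bundle — in fact, as in the bundle case, an \'etale-locally trivial fibration whose structure is controlled by $J[r]$ — one gets a commutative diagram relating $\mathrm{H}^*(\widetilde{\mathcal{M}})$ and $\mathrm{H}^*(\mathcal{M}(r,D_\xi))$; a transfer or Leray spectral sequence argument, using that the action of $g$ on the base is trivial and on the total space is trivial, forces the action on the fiber cohomology to be trivial as well. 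This last descent is where the coprime hypothesis is essential, exactly as in \cite{BP}.

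The hard part will be step (4): making the descent from the total space to the fiber rigorous. In the bundle case one uses that $\mathcal{M}(r,d)\to J$ (the determinant map) is a fibration, that $J$ acts on $\mathcal{M}(r,d)$ compatibly covering the $r$-th power (or translation) action on $J$, and that the $J[n]$-action on a fiber extends to the $J$-action with trivial cohomological effect; then one invokes $\mathrm{H}^*(\mathcal{M}(r,d),\mathbb{Q})\cong\mathrm{H}^*(J,\mathbb{Q})\otimes\mathrm{H}^*(\mathcal{M}(r,\xi),\mathbb{Q})^{J[n]}$-type splitting — but in fact the cleaner route, which I would follow, is to note that translation by $g$ on $\widetilde{\mathcal{M}}$ is \emph{isotopic to the identity through automorphisms} (connect $g$ to the identity in $G$ by a path), and such an isotopy descends to the fiber only after one checks the path can be chosen to preserve the determinant condition — which it cannot in general, so instead one argues purely cohomologically: $\Phi^*$ is the composition of the restriction $\mathrm{H}^*(\widetilde{\mathcal{M}})\to\mathrm{H}^*(\mathcal{M}(r,D_\xi))$ with $g^*=\mathrm{Id}$, \emph{provided} the restriction map is surjective. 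Surjectivity of restriction to the fiber is the crux; in the coprime bundle setting it follows from the Leray–Hirsch theorem applied to the fibration $\mathcal{M}(r,d)\to J$, and I expect the analogous Leray–Hirsch statement for the connection moduli fibration over the ``Hodge'' base to be available or provable by the same generators-of-cohomology argument (Atiyah–Bott / Biswas–Raghavendra type generation of $\mathrm{H}^*$ by universal classes), so that every class on the fiber is the restriction of a global class, killing the difference $\Phi^*-\mathrm{Id}$.
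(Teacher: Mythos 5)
Your strategy (embed $\Phi$ in the action of the connected group of degree--zero line bundles with holomorphic connection on an enlarged moduli space $\widetilde{\mathcal M}$ with varying determinant data, use that a connected group acts trivially on rational cohomology, then descend to the fiber $\mathcal{M}(r,D_\xi)$) has a genuine gap exactly where you locate the difficulty, in step (4), and the gap is not a technical loose end but a circularity. The image of the restriction map $\mathrm{H}^*(\widetilde{\mathcal M},\mathbb{Q})\longrightarrow \mathrm{H}^*(\mathcal{M}(r,D_\xi),\mathbb{Q})$ automatically consists of classes invariant under $\Phi^*$ (and under all the finite--order twisting automorphisms of the fiber), precisely because these automorphisms extend to the connected group acting on $\widetilde{\mathcal M}$. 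Hence asserting that the restriction map is surjective is tantamount to asserting the theorem itself; this is the same reason the naive ``$J[n]$ sits in the connected Jacobian'' argument does not by itself prove the Harder--Narasimhan/Atiyah--Bott triviality statement for moduli of bundles. Your proposed rescue --- Leray--Hirsch via generation of the fiber's cohomology by universal classes --- is indeed the real content, but it is not available off the shelf for the de Rham moduli space $\mathcal{M}(r,D_\xi)$, and once you grant such a generation statement the entire enlarged--moduli/connected--group scaffolding becomes superfluous: you can check directly that the generators are fixed by $\Phi^*$.

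That direct check is what the paper does, after first moving to a space where a generation theorem is actually known. It uses Simpson's non--abelian Hodge correspondence to identify $\mathcal{M}(r,D_\xi)$ (diffeomorphically, not holomorphically) with the Higgs moduli space $\mathcal{M}_H(r,\xi)$ with fixed determinant and traceless Higgs field; since $D_L$ has finite monodromy it is unitary, so the pair $(L,D_L)$ corresponds to the Higgs line bundle $(L,0)$, and $\Phi$ corresponds to the tensoring automorphism $\Phi_H$ of $\mathcal{M}_H(r,\xi)$. Coprimality of $r$ and $d$ gives a universal bundle $\mathcal E$ on $X\times\mathcal{M}_H(r,\xi)$, and by Markman and Hausel--Thaddeus the rational cohomology of $\mathcal{M}_H(r,\xi)$ is generated by K\"unneth components of the $c_i(\mathcal E)$; since $c_1(L)=0$, these classes are unchanged under twisting by $p_X^*L$, so $\Phi_H^*$, and hence $\Phi^*$, is the identity. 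If you want to salvage your route, you would either have to prove the Leray--Hirsch surjectivity independently (which in practice again means proving a tautological--generation theorem, most naturally on the Higgs side), or abandon the fibration argument and argue on generators as the paper does.
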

 
Finally, in Section \ref{se4} we generalize the Atiyah--Weil criterion for the existence of a 
holomorphic connection on a given vector bundle $E \longrightarrow X$ into a 
criterion for the existence of a logarithmic connection on $E$ with prescribed 
central residues.

\section{Equivariant connections and parabolic connections}

\subsection{Parabolic connections from equivariant connections}\label{pareq}

Let $Y$ be a compact connected Riemann surface. The group of all
holomorphic automorphisms of $Y$ will be denoted by $\text{Aut}(Y)$. Let
\begin{equation}\label{e1}
\Gamma\, \subset\, \text{Aut}(Y)
\end{equation}
be a finite subgroup. A $\Gamma$--\textit{equivariant} holomorphic vector
bundle on $Y$ is a holomorphic vector bundle $E\, \longrightarrow\,Y$ equipped 
with a lift of the tautological action of $\Gamma$ on $Y$. More precisely, a 
$\Gamma$--equivariant holomorphic vector bundle on $Y$ is a pair of the 
form $(E\, ,\rho)$, 
where $E\, \longrightarrow\, Y$ is a holomorphic vector bundle, and $\rho$ is an 
action of $\Gamma$ on the total space of $E$ such that for each $\gamma\, \in\, 
\Gamma$, the map
$$
\rho(\gamma)\, :\, E\, \longrightarrow\, E
$$
is a holomorphic isomorphism of the vector bundle $E$ with the pullback
$(\gamma^{-1})^* E$. Clearly, this condition implies that the action $\rho$ is 
holomorphic.

Fix a reduced 
effective divisor $S_Y$ on $Y$ such that
\begin{enumerate}
\item the tautological action of $\Gamma$ on $Y$ preserves $S_Y$, meaning
$\Gamma(S_Y)\, \subset\, S_Y$, and

\item the action of $\Gamma$ on $S_Y$ is free.
\end{enumerate}
The holomorphic cotangent bundle of $Y$ will be denoted by $K_Y$. Since 
$\gamma(S_Y)\, =\, S_Y$ for all $\gamma\, \in\, \Gamma$,
the tautological action of $\Gamma$ on $X$ lifts naturally to the total space
of the holomorphic line bundle $K_Y\otimes {\mathcal O}_Y(S_Y)$. This action on 
$K_Y\otimes {\mathcal O}_Y(S_Y)$ shall be denoted by $\delta$.

A \textit{logarithmic connection} on a holomorphic vector bundle $E\, 
\longrightarrow\, Y$ singular 
over $S_Y$ is a holomorphic differential operator
$$
D\, :\, E\, \longrightarrow\, E\otimes K_Y\otimes {\mathcal O}_Y(S_Y)
$$
satisfying the Leibniz identity, which says that $D(fs) \,=\, f\cdot ds+
s\otimes (df)$, where $f$ is any locally defined holomorphic function on $Y$
and $s$ is any locally defined holomorphic section of $E$.
The singular locus of a logarithmic connection $D$ singular over $S_Y$ is 
contained in $S_Y$.

Assume that $(E\, ,\rho)$ is $\Gamma$--equivariant. A logarithmic connection 
$D$ on $E$ is called $\Gamma$--\textit{equivariant} if the action $\rho$
preserves $D$, meaning
$$
D(\rho(\gamma)(s)) \,=\, (\rho(\gamma)\otimes\delta(\gamma))(D(s))
$$
for all $\gamma\, \in\, \Gamma$ and all locally defined holomorphic section $s$ 
of $E$, where $\delta$ is the action defined earlier (if $s$ is defined over $U\, 
\subset\, Y$, then the section $\rho(\gamma)(s)$ is defined over $\gamma(U)$).

The singular locus of a $\Gamma$--equivariant logarithmic connection
is clearly preserved by the action of $\Gamma$.

Define
\begin{equation}\label{e2}
X\,:=\, Y/\Gamma\, .
\end{equation}
So $X$ is a compact connected Riemann surface. Let
\begin{equation}\label{e3}
\phi\, :\, Y\, \longrightarrow\, X
\end{equation}
be the quotient map. This $\phi$ is a Galois covering with Galois
group $\Gamma$. Let
\begin{equation}\label{e4}
R_{Y}\, \subset\, Y
\end{equation}
be the subset with nontrivial isotropy for the tautological action of $\Gamma$
on $Y$. The map $\phi$ in \eqref{e3} fails to be \'etale precisely over
\begin{equation}\label{e5}
R_X\, :=\, \phi(R_Y)\, \subset\, X\, .
\end{equation}
Define
\begin{equation}\label{e6}
S_X\, :=\, \phi(S_Y) ~\, ~\text{ and } ~\, ~ \Delta\, :=\, S_X\cup R_X\,
\subset\, X\, .
\end{equation}
We have $S_X\bigcap R_X\,=\, \emptyset$, because the action of $\Gamma$ 
on $S_Y$ is free.

Given a $\Gamma$--equivariant holomorphic vector bundle on $Y$, there is a 
canonically associated parabolic vector bundle of same rank over $X$ with 
parabolic structure over $R_X$ \cite[Section 2c]{Bi}. Let $E$ be a
$\Gamma$--equivariant vector bundle on $Y$, and let $V_*$ be the
parabolic vector bundle on $X$ associated to $E$. If $V$ is the
holomorphic vector bundle on $X$ underlying the parabolic vector bundle
$V_*$, then
\begin{equation}\label{e7}
V\, =\, (\phi_*E)^\Gamma\, \subset\, \phi_*E\, ,
\end{equation}
where $(\phi_*E)^\Gamma$ is the invariant part for the natural action of
the Galois group $\Gamma$ on $\phi_*E$ (see \cite[p. 310, (2.9)]{Bi}).

For each point $y\, \in\, Y$, let
$$
\Gamma_y\, :=\, \{\gamma\, \in\, \Gamma\, \mid\, \gamma(y)\,=\, y\}\, \subset\, 
\Gamma
$$
be the isotropy subgroup. The order of $\Gamma_y$ depends only on $x 
\,=\, \phi(y)$ and shall be denoted by
\begin{equation}\label{nx}
n_x\, :=\, \#\Gamma_y\, .
\end{equation}

The parabolic structure on $V$ then is given by the decreasing, left 
continuous filtration $\{V_t\}_{t \in \mathbb{R}}$ defined by
 \begin{equation}\label{e7b}
V_t\, =\, (\phi_*
(E\otimes \mathcal{O}_Y(\sum_{x \in {R}_X}\lfloor -t n_x\rfloor 
\phi^{-1}(x)_{\textrm{red}})) )^\Gamma\, ,
\end{equation}
where $\phi^{-1}(x)_{\rm red}$ is the set--theoretic
inverse image of $x$ (the reduced inverse image) (see \cite[p. 310, 
(2.9)]{Bi}); here $\lfloor Z \rfloor$ denotes the integer 
part of $Z$ and the action of $\Gamma$ on 
$\mathcal{O}_Y(k\cdot\phi^{-1}(x)_{\textrm{red}})$ is the 
natural one (note that $\phi^{-1}(x)_{\textrm{red}}$ is invariant under 
$\Gamma$). We say that $\alpha \,\in\, [0,1)$ is a jump for this filtration if 
$V_{\alpha+\varepsilon}\neq V_{\alpha}$ for all $\varepsilon >0$. 
There is a finite number of such jumps, which we shall denote by
$$0\,\leq\, \alpha_0\,<\,\alpha_1\,<\, \cdots\, <\, \alpha_\ell\,<\,1\, .$$
These will be the parabolic weights associated to the quasiparabolic filtration
$$V\,=\,V_{\alpha_0}\,\supset\, V_{\alpha_1}\,\supset\, \cdots \,\supset\, 
V_{\alpha_\ell} \,\supset\, V_1 \,=\,V \otimes \mathcal{O}(-R_X)\, .$$
By taking the quotient by $V\otimes \mathcal{O}_X(-R_X)$, we can associate,
for each point $x \in R_X$, a filtration (with corresponding weights) by 
sub-vector spaces of the fiber $V_x$:
$$V_x\,=\,V_x^0\,\supset\, V^1_x\,\supset\,\cdots\, \supset\, V^{\ell_x}_x 
\,\supset\, V_x^{\ell_x+1}\,=\,0\, .$$

Take any $y\, \in\, R_Y$, and denote $\phi(y)$ by $x$. Let $\chi_0$ be the 
character of $\Gamma_y$ given by the tautological action of $\Gamma_y$ on 
$T_yY$. Consider the action of $\Gamma_y$ on $E_y$ given by $\rho$. From \eqref{e7b} it follows that $$E_y=\bigoplus_{i=0}^{\ell_x} W_i,$$ where 
 $W_i$ is the isotypical component of $E_y$ for 
the character $\chi^{n_x\alpha_i}_0$, \emph{i.e.}
$$
W_i\, =\, \{v\, \in\, E_y\, \mid\, \rho(\gamma)(v)\,=\, 
\chi^{n_x\alpha_i}_0(\gamma)\cdot v~\, \forall~ \gamma\, \in\, \Gamma_y\}\, .
$$
{}Moreover, there is a canonical isomorphism
\begin{equation}\label{ci}
V^i_x/V^{i+1}_x\,\stackrel{\sim}{\longrightarrow}\, W_i\otimes 
(K_Y\vert_y)^{\otimes n_x\alpha_i}\, .
\end{equation}
for each $i\,\in\,[0,\ell_x]$ (the fiber $\mathcal{O}_Y(-\{y\})|_y$ is 
canonically identified to $K_Y|_y$ by the Poincar\'e adjunction formula).

The holomorphic cotangent bundle of $X$ will be denoted by $K_X$.

\begin{lemma}\label{lem1}
Let $(E\, ,\rho \, ,D)$ be a $\Gamma$--equivariant logarithmic connection on 
$Y$. Let $V$ be the holomorphic vector bundle on $X$ underlying the parabolic 
vector bundle associated to $E$. Then $D$ induces a logarithmic connection
$$
V\, \stackrel{D'}{\longrightarrow}\, V\otimes K_X\otimes {\mathcal O}_X(\Delta)
$$
on $V$, where $\Delta$ is defined in \eqref{e6}.
\end{lemma}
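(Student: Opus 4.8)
The plan is to construct $D'$ by restricting the push-forward connection $\phi_*D$ to the $\Gamma$-invariant subbundle $V \subset \phi_* E$, and then to check that the result is a logarithmic connection on $X$ with singular locus contained in $\Delta = S_X \cup R_X$. First I would recall that since $\phi$ is \'etale over $X \setminus R_X$, the push-forward $\phi_*E$ carries a natural logarithmic connection $\phi_*D$ over $X$ singular over $S_X \cup R_X$: away from $R_X$ this is just the descent of $D$ under the local biholomorphism $\phi$, and the only subtlety is the behaviour near the ramification points, where one must verify that $\phi_*D$ has at worst a logarithmic pole. Because $\Gamma$-equivariance of $D$ means precisely that $D$ commutes with the $\Gamma$-action (in the twisted sense involving $\delta$), the operator $\phi_*D$ is $\Gamma$-equivariant for the Galois action on $\phi_*E$, and hence it preserves the invariant part $V = (\phi_*E)^\Gamma$. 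This gives the operator $D'$; the Leibniz identity for $D'$ is inherited from that of $\phi_*D$ since the inclusion $V \hookrightarrow \phi_*E$ is ${\mathcal O}_X$-linear.

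The substantive point is the local analysis at a ramification point $x \in R_X$, which I would do in a local coordinate. Pick $y \in \phi^{-1}(x)$ with isotropy $\Gamma_y$ cyclic of order $n_x$, a local coordinate $w$ at $y$ on which a generator of $\Gamma_y$ acts by $w \mapsto \zeta w$ with $\zeta$ a primitive $n_x$-th root of unity, and $z = w^{n_x}$ a local coordinate at $x$. Decomposing $E_y = \bigoplus_i W_i$ into isotypical components as in the excerpt and using a $\Gamma_y$-equivariant local frame, the connection $D = d + A\,\frac{dw}{w}$ (if $y \in S_Y$) or $D = d + A\,dw$ (if $y \notin S_Y$), with $A$ holomorphic and block-respecting the isotypical decomposition up to the equivariance constraint. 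Pushing forward, one rewrites $\frac{dw}{w} = \frac{1}{n_x}\frac{dz}{z}$ and $dw = \frac{1}{n_x} w^{1-n_x}\,dz = \frac{1}{n_x} z^{(1-n_x)/n_x}\,dz$; the apparent negative-power singularity in the latter case is exactly compensated by the twist $(K_Y|_y)^{\otimes n_x \alpha_i}$ appearing in the identification \eqref{ci}, so that on each graded piece $V^i_x/V^{i+1}_x$ the induced connection has only a simple pole. This is the step I expect to be the main obstacle: one has to track the grading shifts carefully and confirm that the invariant sections of $\phi_*E$ near $x$, which by \eqref{e7b} are sections of $E$ with prescribed vanishing/pole orders along $\phi^{-1}(x)_{\mathrm{red}}$ dictated by the characters $\chi_0^{n_x\alpha_i}$, are sent by $D$ into the corresponding twisted sheaf tensored with $K_X \otimes {\mathcal O}_X(\Delta)$ — equivalently, that $D$ shifts the filtration index by at most one, which is exactly the content of the residue splitting the quasiparabolic filtration.

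Concretely, I would organize the computation as follows. Over $X \setminus \Delta$, $\phi$ is an honest covering and $D'$ is manifestly a holomorphic connection, so nothing needs checking there. Over a point of $S_X$ (which, by $S_X \cap R_X = \emptyset$, is \'etale for $\phi$), the log pole of $D$ descends directly to a log pole of $D'$, contributing the ${\mathcal O}_X(S_X)$ part of the twist. The remaining work is purely local at each $x \in R_X$, and there I would use the explicit model above to show that $\phi_*D$ maps $V$ into $V \otimes K_X \otimes {\mathcal O}_X(R_X)$ near $x$; combined with the $S_X$ analysis this yields the target $V \otimes K_X \otimes {\mathcal O}_X(\Delta)$. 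Finally, the Leibniz identity is local and ${\mathcal O}_X$-linear in $f$, so it passes from $\phi_*D$ to its restriction $D'$ without change, completing the verification that $D'$ is a logarithmic connection on $V$ singular over $\Delta$. The finer assertions of the theorem — that $D'$ preserves the quasiparabolic filtration, that the weights match the residue eigenvalues, and that the residue splits the filtration — are then immediate corollaries of the same local model, but Lemma~\ref{lem1} itself only requires the existence of $D'$ as a logarithmic connection, which the above establishes.
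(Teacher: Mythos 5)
Your construction coincides with the paper's: push $D$ forward, check that the result is a logarithmic connection on $\phi_*E$, and use $\Gamma$--equivariance to restrict it to $V=(\phi_*E)^\Gamma$. The only real difference is how the logarithmic bound at the ramification points is verified. The paper does this globally in one stroke: $d\phi$ identifies $\phi^*(K_X\otimes{\mathcal O}_X(x))$ with $K_Y\otimes{\mathcal O}_Y(\phi^{-1}(x)_{\rm red})$ for every $x\in X$, hence $K_Y\otimes{\mathcal O}_Y(S_Y)$ is a subsheaf of $\phi^*(K_X\otimes{\mathcal O}_X(\Delta))$; composing $D$ with this inclusion, taking direct images and using the projection formula gives a map $\phi_*E\to(\phi_*E)\otimes K_X\otimes{\mathcal O}_X(\Delta)$, which satisfies Leibniz and commutes with the Galois action, so it restricts to $D'$ on $V$ --- no isotypical decomposition or weight bookkeeping enters. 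Your local computation ($dw=\tfrac{1}{n_x}w^{1-n_x}dz=\tfrac{w}{n_x}\tfrac{dz}{z}$) proves exactly this containment, so your route does work, but two framing remarks are off. First, the step you single out as the main obstacle --- that ``$D$ shifts the filtration index by at most one, which is exactly the content of the residue splitting the quasiparabolic filtration'' --- is neither needed for nor equivalent to the lemma: the residue-splitting is the later, stronger statement (Proposition \ref{prop} and the final assertions of the theorem), and invoking it here would be circular; all the lemma needs is that holomorphic one-forms on $Y$ near $\phi^{-1}(R_X)$ are sections of $\phi^*(K_X\otimes{\mathcal O}_X(R_X))$, which is the displayed identity for $dw$. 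Second, the twist $(K_Y\vert_y)^{\otimes n_x\alpha_i}$ in \eqref{ci} serves to identify fibers of the graded pieces of the parabolic filtration; it is not what compensates the apparent negative power of $z$ --- that is absorbed by the allowed pole ${\mathcal O}_X(\Delta)$, equivalently by the factors $w^{m}$ appearing in the $\Gamma_y$-invariant local sections of $\phi_*E$ described by \eqref{e7b}. Neither point breaks your argument, but the paper's sheaf-level observation makes the grading analysis you anticipate unnecessary for this lemma (it is deferred to the proof of Proposition \ref{prop}, where a local model is indeed used).
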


\begin{proof}
We first observe that $K_Y\otimes {\mathcal O}_Y(S_Y)$ is a subsheaf of
$\phi^*(K_X\otimes {\mathcal O}_X(\Delta))$ using the differential
$d\phi\, :\, \phi^*K_X\,\longrightarrow\, K_Y$ of $\phi$. Indeed, this follows 
from the fact that for any point $x\,\in\, X$, the differential $d\phi$ 
identifies $\phi^*(K_X\otimes{\mathcal O}_X(x))$ with $K_Y\otimes {\mathcal 
O}_Y(\phi^{-1}(x)_{\rm red})$, where $\phi^{-1}(x)_{\rm red}$ as before is the 
set--theoretic inverse image of $x$. Consider the composition
$$
E\,\stackrel{D}{\longrightarrow}\,E\otimes K_Y\otimes {\mathcal O}_Y(S_Y)
\, \hookrightarrow\, E\otimes \phi^*(K_X\otimes {\mathcal O}_X(\Delta))\, .
$$
Taking direct image of both sides, and using the projection formula, we have
$$
\phi_*E \, \longrightarrow\, \phi_*(E\otimes \phi^*(K_X\otimes {\mathcal 
O}_X(\Delta)))\,=\, (\phi_*E) \otimes K_X\otimes {\mathcal O}_X(\Delta)\, .
$$
It is straight--forward to check that this homomorphism satisfies the
Leibniz identity, hence it is a logarithmic connection on $\phi_*E$. Let 
$\widetilde D$ denote this logarithmic connection on $\phi_*E$.

Since the action $\rho$ of $\Gamma$ on $E$ preserves the logarithmic connection 
$D$, from the above construction of $\widetilde D$ it follows immediately that 
$\widetilde D$ commutes with the natural action of the Galois group $\Gamma$ on 
$\phi_*E$. Therefore, $\widetilde D$ induces a logarithmic connection $D'$ on the 
invariant part $(\phi_*E)^\Gamma$. Now the lemma follows from the isomorphism in 
\eqref{e7}.
\end{proof}

Since the tautological action of $\Gamma$ on $Y$ is faithful, and $\Gamma$ is a 
finite group, it follows that the isotropy subgroup $\Gamma_y$ is a finite 
cyclic group. In fact, the action of $\Gamma_y$ on the holomorphic tangent line 
$T_y Y$ is faithful. We have $\Gamma_y\,=\,\{e\}$ for all $y\, \in\, 
Y\setminus R_Y$. In particular, $\Gamma_y\,=\, \{e\}$ for all $y\, \in\, S_Y$.
Let $\Gamma^\vee_y$ denote the group of characters of $\Gamma_y$.

As before, let $(E\, ,\rho\, ,D)$ be a $\Gamma$--equivariant logarithmic 
connection on $Y$. For any $y\, \in\, Y$, let
\begin{equation}\label{e8}
E_y\,=\, \bigoplus_{\chi\in \Gamma^\vee_y} E^\chi_y
\end{equation}
be the isotypical decomposition for the action of $\Gamma_y$ on the fiber $E_y$
given by $\rho$. Here $E_y^\chi \,=\, \{v \in E_y 
~|~\rho(\gamma)(v)\,=\,\chi(\gamma)\cdot 
v$ for all $\gamma \,\in\, \Gamma_y\}$. It should be clarified that $E^\chi_y$ 
may be zero for some character $\chi$. Let
$$
\text{Res}(D\, ,y)\, \in\, \text{End}(E_y)\, =\, E_y\otimes E^\vee_y
$$
be the residue of $D$ at $y\, \in\, S_Y$.

Let $V_*$ be the parabolic vector bundle, with parabolic structure over
$R_X$, associated to $E$. As before, the holomorphic vector bundle underlying 
$V_*$ will be denoted by $V$. Take any point $x\, \in\, R_X$. Let
\begin{equation}\label{e9}
0\, =\, V^{\ell_x+1}_x\, \subset\, V^{\ell_x}_x \, \subset\, \cdots \, \subset\,
V^1_x \, \subset\, V^0_x\,=\, V_x
\end{equation}
be the quasiparabolic filtration for $V_*$ at the point $x$. Take any point
$y \,\in\,\phi^{-1}(x)$, and let $n_x$ be as in \eqref{nx}. Let $\chi_0$ be the 
character of $\Gamma_y$ given by the action of 
$\Gamma_y$ on the tangent line $T_y Y$. If $m_i/n_x$, $0\,\leq\, i\, \leq\, 
\ell_x$, is the parabolic weight of the subspace $V^i_x\, \subset\, V_x$ in 
\eqref{e9}, then $V^i_x/V^{i+1}_x$ is canonically identified with
$$
E^{\chi^{m_i}_0}_y\otimes(K_Y\vert_y)^{\otimes m_i}\, ,
$$
where $E^{\chi^{m_i}_0}_y\, \subset\, E_y$ is the direct summand in \eqref{e8} 
(see \eqref{ci}). This implies that
$${\rm End}(V^i_x/V^{i+1}_x)\,=\, {\rm End}(E^{\chi^{m_i}_0}_y)\, .$$

\begin{proposition}\label{prop}
Take a point $x\, \in\, R_X$. Let ${\rm Res}(D'\, ,x)$ be the residue, at $x$, 
of the logarithmic connection $D'$ on $V$ given by $D$ (see Lemma \ref{lem1}). 
Then there is a complete splitting of the filtration in \eqref{e9} satisfying 
the following conditions:
\begin{enumerate}
\item each direct summand in the complete splitting is preserved by
${\rm Res}(D'\, ,x)$, in particular, ${\rm Res}(D'\, ,x)$ produces an
endomorphism of each sub-quotient $V^i_x/V^{i+1}_x$, $0\, \leq\, i\, \leq\,
\ell_x$, in \eqref{e9}, and

\item the restriction of ${\rm Res}(D'\, ,x)$ to the direct summand of
$V_x$ corresponding to $V^i_x/V^{i+1}_x$ is $\frac{m_i}{n_x}{\rm Id}$
(recall that $m_i/n_x$ is the parabolic weight of $V^i_x/V^{i+1}_x$, where
$n_x\,=\, \# \Gamma_x$).
\end{enumerate}
\end{proposition}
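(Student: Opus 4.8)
The plan is to reduce the assertion to an explicit local computation near a point $y\,\in\,\phi^{-1}(x)$. The decisive simplification is that $y\,\notin\, S_Y$ (since $S_X\cap R_X\,=\,\emptyset$, because $\Gamma$ acts freely on $S_Y$), so $D$ is a \emph{nonsingular} connection near $y$. First I would choose a holomorphic coordinate $z$ on $Y$ centred at $y$ such that a generator $\sigma$ of the cyclic group $\Gamma_y$ acts by $z\,\mapsto\,\zeta z$ with $\zeta\,=\,\exp(2\pi\sqrt{-1}/n_x)$, together with the coordinate $w\,=\,z^{n_x}$ on $X$ centred at $x$; then $\phi$ is locally $z\,\mapsto\, z^{n_x}$, and near $x$ the bundle $V\,=\,(\phi_*E)^\Gamma$ is identified with the sheaf of $\Gamma_y$--invariant local sections of $E$ around $y$. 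Because a $\Gamma_y$--equivariant bundle on a disc is linearizable at the fixed point, one can choose a local holomorphic frame $e_1,\dots,e_r$ of $E$ near $y$ made of $\Gamma_y$--eigenvectors, say of weights $a_1,\dots,a_r$, so that $e_j(y)\,\in\, E_y^{\chi_0^{a_j}}$ in the notation of \eqref{e8}. Letting $c_j\,\in\,\{0,1,\dots,n_x-1\}$ be the residue of $a_j$ modulo $n_x$ (for the convention matching \eqref{e7b}), the sections $\widetilde e_j\,:=\, z^{c_j}e_j$ form, over a neighbourhood of $x$, a local holomorphic frame of $V$, with $c_j/n_x$ the parabolic weight of $\widetilde e_j(x)$; this is the frame underlying \eqref{e7b} and the identification \eqref{ci}. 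In particular $V^i_x$ is spanned by the $\widetilde e_j(x)$ with $c_j\,\geq\, m_i$, and $V^i_x/V^{i+1}_x$ by those with $c_j\,=\,m_i$.

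Next I would compute $D'$ in this frame. Write $D e_j\,=\,\sum_k e_k\otimes f_{kj}(z)\,dz$, which is legitimate as $D$ is nonsingular at $y$. The $\Gamma_y$--equivariance of $D$, applied to the eigenframe, forces each $f_{kj}$ to have the form $f_{kj}(z)\,=\, z^{b_{kj}}g_{kj}(z^{n_x})$ with $g_{kj}$ holomorphic and $b_{kj}\,\in\,\{0,\dots,n_x-1\}$ satisfying $c_j-c_k+1+b_{kj}\,\equiv\,0\pmod{n_x}$. Following the recipe in the proof of Lemma~\ref{lem1}, near $y$ the inclusion $K_Y\otimes\mathcal{O}_Y(S_Y)\,=\,K_Y\,\hookrightarrow\,\phi^*(K_X\otimes\mathcal{O}_X(\Delta))$ sends $dz$ to $\tfrac{z}{n_x}\,\phi^*(\tfrac{dw}{w})$, since $\phi^*(\tfrac{dw}{w})\,=\,n_x\,\tfrac{dz}{z}$. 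Expanding $D'(\widetilde e_j)\,=\,D(z^{c_j}e_j)$ by the Leibniz rule and re-expressing it in the frame $\{\widetilde e_k\}$ and in the generator $\phi^*(\tfrac{dw}{w})$ yields
$$
D'(\widetilde e_j)\;=\;\sum_k\widetilde e_k\otimes\Big(\frac{c_j}{n_x}\,\delta_{kj}+\frac{1}{n_x}\,z^{c_j-c_k+1}f_{kj}(z)\Big)\,\phi^*\!\Big(\frac{dw}{w}\Big)\, .
$$
By the ranges of $c_j,c_k,b_{kj}$ together with the congruence, the exponent $c_j-c_k+1+b_{kj}$ equals $0$ or $n_x$, so every coefficient $z^{c_j-c_k+1}f_{kj}(z)$ is a holomorphic function $h_{kj}(w)$ of $w$ alone (as it must be, $D'$ being defined over $X$), and $h_{kj}(0)\,=\,0$ whenever $c_k\,\leq\, c_j$ (the exponent then being $n_x$). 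Hence, in the frame $\{\widetilde e_j(x)\}$, the residue $\mathrm{Res}(D',x)$ has diagonal entries $c_j/n_x$, and its $(k,j)$--entry vanishes unless $c_k\,>\,c_j$.

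From this local normal form the proposition becomes pure linear algebra. Since $\mathrm{Res}(D',x)$ sends each $\widetilde e_j(x)$ to $\tfrac{c_j}{n_x}\widetilde e_j(x)$ plus a combination of $\widetilde e_k(x)$ with $c_k\,>\,c_j$, it preserves every step $V^i_x$ of the flag \eqref{e9} and sends each $\widetilde e_j(x)$ with $c_j\,=\,m_i$ to $\tfrac{m_i}{n_x}\widetilde e_j(x)$ modulo $V^{i+1}_x$ (any correction term $\widetilde e_k(x)$ with $c_k\,>\,m_i$ has parabolic weight $\geq\alpha_{i+1}$, hence lies in $V^{i+1}_x$). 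Thus $\mathrm{Res}(D',x)$ preserves the flag \eqref{e9} and acts as the pairwise distinct scalars $\tfrac{m_0}{n_x}\,<\,\tfrac{m_1}{n_x}\,<\,\cdots\,<\,\tfrac{m_{\ell_x}}{n_x}$ on the successive graded pieces. Any endomorphism preserving a finite flag and acting by pairwise distinct scalars on its successive graded pieces is diagonalizable, and for each $i$ its $\tfrac{m_i}{n_x}$--eigenspace $V_x[i]$ is a complement of $V^{i+1}_x$ in $V^i_x$: by descending induction on $i$, the operator $\mathrm{Res}(D',x)-\tfrac{m_i}{n_x}\mathrm{Id}$ restricts to an invertible endomorphism of $V^{i+1}_x$, so each lift to $V^i_x$ of a vector of $V^i_x/V^{i+1}_x$ can be corrected by a unique element of $V^{i+1}_x$ to an eigenvector. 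The resulting decomposition $V_x\,=\,\bigoplus_{i=0}^{\ell_x}V_x[i]$ satisfies $V^i_x\,=\,\bigoplus_{j\geq i}V_x[j]$, so it is a complete splitting of \eqref{e9}; each summand $V_x[i]$ is preserved by $\mathrm{Res}(D',x)$ with $\mathrm{Res}(D',x)|_{V_x[i]}\,=\,\tfrac{m_i}{n_x}\mathrm{Id}$; and under the induced isomorphism $V_x[i]\,\cong\, V^i_x/V^{i+1}_x$ this matches the endomorphism $\tfrac{m_i}{n_x}\mathrm{Id}$ of $V^i_x/V^{i+1}_x$ induced by $\mathrm{Res}(D',x)$. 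This is exactly the claim.

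The hard part will be the middle step: linearizing the equivariant bundle at the fixed point $y$, and then carefully transporting $D$ through the twist by $z^{c_j}$ and the change of local generator from $dz$ to $\phi^*(dw/w)$ across the branch point, so that the residue matrix emerges in the stated triangular form with the parabolic numerators $m_i$ on the diagonal. Once that normal form is established, the diagonalizability and the identification of the splitting are routine.
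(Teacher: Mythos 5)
Your proof is correct, but it reaches the splitting by a different mechanism than the paper. Both arguments localize at a point $y\,\in\,\phi^{-1}(x)$, use that $D$ is nonsingular there (since $S_X\cap R_X\,=\,\emptyset$), and exploit the $\Gamma_y$--isotypical structure; after that they diverge. The paper takes the isotypical decomposition of the fibre $E_y$ and extends it by $D$--parallel transport to a $D$--flat, $\Gamma_y$--invariant decomposition $E\vert_U\,=\,\bigoplus_\chi F^\chi$, which descends to a decomposition of $V_*$ near $x$ into parabolic pieces each carrying a single weight; this gives the residue--invariant complete splitting at once, and the scalar value of the residue on each piece is then read off from a separate rank--one local model ($\mathcal{O}_{\mathbb D}$ with a twisted $\mathbb{Z}/n\mathbb{Z}$--action and the de Rham connection). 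You instead pick a $\Gamma_y$--eigenframe $e_j$ that need not be $D$--flat, pass to the induced frame $z^{c_j}e_j$ of $V$, and compute the full matrix of $D'$ with respect to $\phi^*(dw/w)$: equivariance forces the congruence $b_{kj}\equiv c_k-c_j-1 \pmod{n_x}$, the exponent bookkeeping shows the entries are functions of $w$ with the residue triangular (nonzero off--diagonal entries only when $c_k>c_j$) and diagonal $c_j/n_x$, and then pure linear algebra (distinct scalars on the graded pieces of a preserved flag) yields the diagonalization, whose eigenspaces give the complete splitting. I checked your conventions and the computation: with $E^{\chi_0^{a_j}}_y$ as in \eqref{e8} and the weight normalization of \eqref{e7b}, the invariant sections are indeed $z^{c_j}e_j$ with $c_j\equiv a_j$, your congruence is the right one, the exponent $c_j-c_k+1+b_{kj}$ can only be $0$ or $n_x$, and the descending--induction diagonalization is sound; the only ingredient you quote without proof, the equivariant linearization of $E$ at the fixed point, is standard (average an arbitrary trivialization over $\Gamma_y$). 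What each approach buys: the paper's parallel--transport decomposition avoids all matrix computations and produces the splitting as flat subbundles over a whole neighbourhood, not just in the fibre, but needs the auxiliary local model to identify the residue scalars; your normal--form computation is self--contained, identifies the residue eigenvalues and the parabolic weights simultaneously, and makes explicit the extra fact that ${\rm Res}(D',x)$ is automatically diagonalizable with eigenvalues exactly the weights $m_i/n_x$.
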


\begin{proof}
Take any point $y\, \in\, \phi^{-1}(x)$. Since $x\, \in\, R_X$, the
connection $D$ is nonsingular in a neighborhood of $y$. Consider the 
decomposition 
$$
E_y\,=\, \bigoplus_{\chi\in \Gamma^\vee_y} E^\chi_y
$$
in \eqref{e8}. Since $D$ is a flat connection on $E$ over $Y\setminus S_Y$, 
taking parallel translations for $D$, this decomposition produces a holomorphic
decomposition
\begin{equation}\label{e11}
E\vert_U\,=\, \bigoplus_{\chi\in \Gamma^\vee_y} F^\chi
\end{equation}
over a sufficiently small contractible analytic neighborhood $U\, \subset\, Y$ of 
the point $y$ such that
\begin{enumerate}
\item $\gamma(U)\, =\, U$ for all $\gamma \in \Gamma_y$, and

\item the natural map $U/\Gamma_y\, \longrightarrow\, X$ defined by $\phi$ is 
injective.
\end{enumerate}
Such a neighborhood can be found because in some suitable coordinate charts 
near $y\,\in \, Y$ and $x\,\in\, X$, the restriction of $\phi$ is 
given by $z \,\longmapsto\, z^{n_x}$.

The above defined $F^\chi$ is the unique flat subbundle of $E\vert_U$ such 
that $F^\chi_y\,=\,E^\chi_y$; it is easy to check that $F^\chi$ is preserved 
under the action of $\Gamma_y$ on $E\vert_U$. It should be clarified that the 
decomposition of $E\vert_U$ in \eqref{e11} does not depend on anything other 
than the connection $D$.

For any $\gamma\, \in\, \Gamma$ and $\gamma_1\, \in\, \Gamma_y$, the subbundle 
$\rho(\gamma\gamma_1)(F^\chi)$ of $E\vert_{\gamma\gamma_1(U)}$ coincides with 
the subbundle $\rho(\gamma)(F^\chi)$, where $\rho$ is the action of $\Gamma$ on $E$ 
(note that $\gamma\gamma_1(U)\,=\, \gamma(U)$). The decomposition in 
\eqref{e11} produces a holomorphic decomposition
\begin{equation}\label{e13}
E\vert_{\phi^{-1}(\phi(U))}\,=\, E\vert_{\Gamma (U)}\,=\,
\bigoplus_{\chi\in \Gamma^*_y} \Gamma (F^\chi)
\end{equation}
of the vector bundle. The action of $\Gamma$ on $E$ clearly preserves $\Gamma 
(F^\chi)$.
It is straight-forward to check that the decomposition in \eqref{e13} is 
independent of the choice of the point $y\, \in\, \phi^{-1}(x)$.
In view of \eqref{e7}, the decomposition in \eqref{e13}
produces a holomorphic decomposition
\begin{equation}\label{e12}
V_*\vert_{\phi(U)} \,=\, \bigoplus_{\chi\in \Gamma^*_y} W^\chi_*
\end{equation}
of the parabolic vector bundle over $\phi(U)$; here $W^\chi_*$ is the
parabolic vector bundle on $\phi(U)$ corresponding to the 
$\Gamma$--equivariant vector bundle $\Gamma (F^\chi)\, \longrightarrow\,
\phi^{-1}(\phi(U))$.

Let $W^\chi_x$ be the fiber, over $x$, of the holomorphic vector bundle
underlying the parabolic vector bundle $W^\chi_*$ in \eqref{e12}. We recall 
that the group $\Gamma_y$ acts on the fiber $F^\chi\vert_y\,=\, E^\chi_y$ as
scalar multiplications through the character $\chi$. This implies that
the quasiparabolic filtration of the fiber $W^\chi_x$ is the trivial filtration
$$
0\, \subset\, W^\chi_x\, .
$$
In other words, $W^\chi_*$ has at most one parabolic weight at $x$. Recall that
$W^\chi_*$ can be of rank zero for some $\chi$; if the rank of $W^\chi_*$ is 
positive, then there is exactly one parabolic weight of $W^\chi_*$ at $x$. If 
$W^\chi_*$ is of positive rank, then the parabolic weight of $W^\chi_*$ at $x$ 
is
\begin{equation}\label{pw}
u/n_x\, ,
\end{equation}
where $n_x$ is defined in \eqref{nx}, and $u\, \in\, [0\, , 
n_x-1]$ is determined by the identity $\chi^u_0\, =\, \chi$ (as before,
$\chi_0$ is the character given by the action of $\Gamma_x$ on $T_yY$).

The decomposition of $V_*\vert_{\phi(U)}$ in \eqref{e12} yields a complete 
splitting
\begin{equation}\label{e14}
V_x \,=\, \bigoplus_{\chi\in \Gamma^*_y} W^\chi_x
\end{equation}
of the filtration in \eqref{e9}.

The decomposition in \eqref{e13} is preserved by the connection $D$. Therefore, 
each $W^\chi_*$ in \eqref{e12} is preserved by the connection 
$D'\vert_{\phi(U)}$ on $V\vert_{\phi(U)}$ (see Lemma \ref{lem1} for $D'$). 
Consequently, the residue ${\rm Res}(D'\, ,x)$ preserves the subspace 
$W^\chi_x\, \subset\, V_x$ for each $\chi$.

To prove the final statement of the proposition, we consider a local model of
$\phi$. Let
\begin{equation}\label{d}
{\mathbb D}\, :=\, \{z\, \in\, \mathbb C\, \mid\, |z| \, \leq\, 1\}\,
\subset\, \mathbb C
\end{equation}
be the open unit disk, and let
\begin{equation}\label{d2}
f_n\,:\,{\mathbb D}\,\longrightarrow\, {\mathbb D}\, , ~ \, n\, \geq\, 2,
\end{equation}
be the holomorphic map defined by $z\, \longmapsto\, z^n$. 
So the Galois group of $f_n$ is $\text{Gal}(f_n)\,=\, {\mathbb Z}/n{\mathbb 
Z}$. Consider
the trivial holomorphic line bundle ${\mathcal O}_{\mathbb D}$ over $\mathbb D$. 
Fix an integer $m\, \in\, [0\, ,n-1]$ and consider the action of ${\mathbb 
Z}/n{\mathbb Z}$ on ${\mathcal O}_{\mathbb D}$ where the generator $1\, \in\, 
{\mathbb Z}/n{\mathbb Z}$ acts as $(z\, ,\lambda)\, 
\longmapsto\, (z\cdot \exp(2\pi\sqrt{-1}/n)\, 
,\lambda\cdot \exp(-2\pi\sqrt{-1}\cdot m/n))$, 
for all $z\, \in\, \mathbb D$ and $\lambda\,\in \, \mathbb C$. Let
$$
((f_n)_*{\mathcal O}_{\mathbb D})^{\text{Gal}(f_n)}\, \longrightarrow\,
f_n({\mathbb D})
$$
be the holomorphic line bundle given by the invariant part of the direct image. 
The holomorphic connection $D$ on ${\mathcal O}_{\mathbb D}$ given by the de 
Rham differential $\beta\, \longmapsto\, d\beta$, being invariant under the 
action of $\text{Gal}(f_n)$, produces a logarithmic connection
on $((f_n)_*{\mathcal O}_{\mathbb D})^{\text{Gal}(f_n)}$ singular over $0\, 
\in\, f_n({\mathbb D})$. It is straight-forward to check that the residue of 
this logarithmic connection at $0$ is $m/n$: If $e$ is the constant function 
$1$ on ${\mathbb D}$, then $(e, z\cdot e, \ldots, z^{n-1}\cdot e)$ is a basis 
of the ${\mathcal O}_{\mathbb D}$--module $(f_n)_*\mathcal{O}_{\mathbb D}$. We 
have $$(f_n)_*D(z^ke)\,=\, kz^k\cdot e 
\otimes \mathrm{d}z/z \,=\, k/n\cdot z^k\cdot e \otimes \mathrm{d}x/x$$ with 
$x=z^n$, for $0\leq k\leq n-1$. Now $\Gamma$ acts on the sections of $(f_n)_*(\mathcal{O}_\mathbb{D})$ as $(z,z^k\cdot e) \longmapsto (z\cdot \exp(2\pi\sqrt{-1}/n)\, 
, \exp(2\pi\sqrt{-1}\cdot (k-m)/n)z^k\cdot e)$. The invariant section of 
$(f_n)_*\mathcal{O}_{\mathbb D}$ thus is $z^k\cdot e$ with $k=m$, and the 
residue of the resulting connection on $((f_n)_*\mathcal{O}_\mathbb{D})^\Gamma$ 
at $0$ is $m/n$.

Using this local model, from (\ref{e7b}) it follows immediately
that the restriction of the residue ${\rm Res}(D'\, ,x)$ to the direct summand
$W^\chi_x$ in \eqref{e14} is $\lambda_\chi\cdot \text{Id}$, where $\lambda_\chi$ 
is the parabolic weight of $W^\chi_*$ at $x$. Recall that there is exactly one
parabolic weight if the rank of $W^\chi_*$ is positive; the parabolic weight 
is described in \eqref{pw}.
\end{proof}

\subsection{Equivariant connections from parabolic connections} \label{eqpar}

Let $X$ be a compact connected Riemann surface. Fix finite subsets
$$
R_X\,\subset\, \Delta\, \subset\, X\, .
$$
Let $V$ be a parabolic vector bundle over $X$ with parabolic structure over 
$R_X$ such that all the parabolic weights are rational numbers. For any
point $x\, \in\, R_X$, let
\begin{equation}\label{f1}
0\, =\, V^{\ell_x+1}_x\, \subset\, V^{\ell_x}_x \, \subset\, \cdots \, \subset\,
V^1_x \, \subset\, V^0_x\,=\, V_x
\end{equation}
be the quasiparabolic filtration at $x$. Let
\begin{equation}\label{f2}
1\, >\,\alpha^{\ell_x}_x \, >\,\cdots \, >\,\alpha^1_x \, >\, \alpha^0_x\,\geq\, 0
\end{equation}
be the corresponding parabolic weights. We have $\alpha^i_x\,\in\, \mathbb Q$ by
our assumption.

Let
$$
D'\, :\, V\, \longrightarrow\, V\otimes K_X\otimes{\mathcal O}_X(\Delta)
$$
be a logarithmic connection on $V$ satisfying the following two conditions:
\begin{enumerate}
\item for each $x\, \in\, R_X$, there is a complete splitting
\begin{equation}\label{f3}
V_x\, =\, \bigoplus_{i=0}^{\ell_x} W^i_x
\end{equation}
of the filtration in \eqref{f1} such that the residue ${\rm Res}(D'\, ,x)$
preserves each subspace $W^i_x$, and

\item the restriction of ${\rm Res}(D'\, ,x)$ to each $W^i_x$ is multiplication 
by the weight $\alpha^i_x$ in \eqref{f2}.
\end{enumerate}

Fix a positive integer $N$ such that $N\cdot \alpha^i_x$ is an integer for
all $x\, \in\, R_X$ and $i\,\in\, [0\, , \ell_x]$.

There is a finite (ramified) Galois covering 
\begin{equation}\label{f4}
\phi\, :\, Y\, \longrightarrow\, X
\end{equation}
such that
\begin{itemize}
\item at each point $y\, \in\, \phi^{-1}(R_X)$, the map $\phi$ is ramified
with ramification index $N-1$ (so $\phi$ is like $z\, \longmapsto\, z^N$
around each point of $\phi^{-1}(R_X)$), and

\item $\phi$ is unramified over each point of $\Delta\setminus R_X$.
\end{itemize}
(See \cite[p. 29, Theorem 1.2.15]{Na} for the existence of such a covering.) 
In the special case where $\text{genus}(X)\,=\, 0\,=\, \# R_X- 1$, the map 
$\phi$ is necessarily ramified
over a point of $X\setminus \Delta$. In all other cases however, the
covering map $\phi$ can be chosen to be unramified over $X\setminus R_X$
\cite[p. 29, Theorem 1.2.15]{Na}. Define
$$
S_X\, :=\, \Delta\setminus R_X.
$$

Let
$$
\Gamma\, :=\, \text{Gal}(\phi)
$$
be the Galois group for the covering $\phi$. There is a $\Gamma$--equivariant 
vector bundle $(E\, ,\rho)$ on $Y$ canonically associated to $V_*$
such that the parabolic vector bundle corresponding to $(E\, ,\rho)$ is $V_*$. 
(See \cite[Section 3]{Bi}.)

The restriction of $E$ to the complement $Y\setminus \phi^{-1}(R_X)$ is the
pullback $\phi^* (V\vert_{X\setminus R_X})$ (see \cite[p. 313, (3.3)]{Bi} and
the definition of $W$ in \cite[p. 313]{Bi}). Since $R_X\, \subset\, 
\Delta$, this implies that the nonsingular connection $D'\vert_{X\setminus 
\Delta}$ on $V\vert_{X\setminus\Delta}$ pulls back to a nonsingular connection 
$D''$ on $E\vert_{Y\setminus \phi^{-1}(\Delta)}\,=\, 
(\phi^*V)\vert_{Y\setminus \phi^{-1}(\Delta)}$.

\begin{proposition}\label{prop2}
The nonsingular connection $D''$ on $E\vert_{Y\setminus \phi^{-1}(\Delta)}$ 
extends to a logarithmic connection
$$
E\, \longrightarrow \, E\otimes K_Y\otimes {\mathcal 
O}_Y(\phi^{-1}(S_X)_{\rm red})\, .
$$
\end{proposition}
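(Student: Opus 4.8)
The plan is to work locally near a point $y\in\phi^{-1}(R_X)$ and show that $D''$ has at worst a logarithmic pole there, with no pole at points of $\phi^{-1}(S_X)$ other than what is allowed (in fact $\phi$ is unramified over $S_X$, so those contribute ordinary logarithmic poles pulled back from $D'$). First I would set $x=\phi(y)\in R_X$, choose local coordinates so that $\phi$ is $z\mapsto z^N=w$ near $y$, and use the hypothesis that $\mathrm{Res}(D',x)$ is semisimple with a complete splitting $V_x=\bigoplus_i W^i_x$ adapted to the quasiparabolic filtration, with eigenvalue $\alpha^i_x$ on $W^i_x$. By the standard local structure of a logarithmic connection with semisimple residue, on a small disk around $x$ there is a holomorphic frame of $V$ in which $D'$ decomposes as a direct sum of rank-one pieces, each of the form $s_i\mapsto \alpha^i_x\, s_i\otimes \mathrm{d}w/w$ up to a holomorphic (nonsingular) correction; equivalently $D'$ is locally gauge-equivalent to $\mathrm{d}+A(w)\,\mathrm{d}w/w$ with $A(0)=\mathrm{Res}(D',x)$ semisimple with rational eigenvalues.

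Next I would pull back this local model under $z\mapsto z^N$. The key computation is that $\mathrm{d}w/w = N\,\mathrm{d}z/z$, so $\phi^*(s_i\otimes \mathrm{d}w/w) = N\,\phi^*s_i\otimes \mathrm{d}z/z$, and the eigenvalue becomes $N\alpha^i_x$, which is an integer by the choice of $N$. Now I would identify how the frame $\phi^*s_i$ of $\phi^*V$ relates to the canonical frame of $E$ near $y$: by the cited description (\cite[p. 313, (3.3)]{Bi}), $E$ near $y$ is obtained from $\phi^*V$ by twisting the $i$-th summand by $\mathcal{O}_Y(-\langle N\alpha^i_x\rangle\{y\})$-type shifts—more precisely the local generator of the $i$-th summand of $E$ is $z^{N\alpha^i_x}$ times that of $\phi^*V$ (this is exactly the content of the isotypical-component description and equation \eqref{ci} read backwards). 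Conjugating the connection $\mathrm{d}+N\alpha^i_x\,\mathrm{d}z/z$ by the gauge transformation $z^{-N\alpha^i_x}$ (legitimate since $N\alpha^i_x\in\mathbb Z$) kills the pole entirely on each summand, so $D''$ extends holomorphically across $y$ on the associated-graded; the off-diagonal holomorphic corrections and the failure of the splitting to hold globally only contribute terms that are holomorphic times $\mathrm{d}z/z$ or already holomorphic, hence at worst a simple pole. Thus $D''$ extends to a logarithmic connection on $E$ with polar divisor contained in $\phi^{-1}(\Delta)_{\mathrm{red}}$; and since over $\phi^{-1}(R_X)$ we have just shown the pole disappears (integrality of $N\alpha^i_x$), the polar divisor is in fact contained in $\phi^{-1}(S_X)_{\mathrm{red}} = \phi^{-1}(\Delta\setminus R_X)_{\mathrm{red}}$.

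I would then assemble these local extensions: extensions of a connection across a codimension-one locus are unique when they exist (a logarithmic extension is determined by the nonsingular connection off the divisor together with a choice of lattice, here the lattice being $E$ itself, which is already given globally), so the local logarithmic extensions glue to a global operator $E\to E\otimes K_Y\otimes\mathcal O_Y(\phi^{-1}(S_X)_{\mathrm{red}})$ satisfying the Leibniz rule, i.e.\ a logarithmic connection on $E$. The main obstacle I expect is the bookkeeping at $\phi^{-1}(R_X)$: one must be careful that the twist relating $E$ to $\phi^*V$ near $y$ uses precisely the weights $N\alpha^i_x$ and matches the splitting $W^i_x$ hypothesized for $\mathrm{Res}(D',x)$—if the residue's splitting were not compatible with the quasiparabolic filtration (condition (1)) or if the residue acted on $W^i_x$ by something other than the scalar $\alpha^i_x$ (condition (2)), the gauge transformation $z^{-N\alpha^i_x}$ would reintroduce a pole or a non-holomorphic term, so these two hypotheses are exactly what makes the pole cancellation work. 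Verifying the Leibniz identity for the glued operator and checking that the holomorphic corrections in the local normal form genuinely stay holomorphic after the gauge change are routine once the weight-matching is pinned down.
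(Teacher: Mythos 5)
Your overall strategy is sound and is in essence the same mechanism as the paper's proof: everything hinges on the local model $z\mapsto z^N$ at a point $y\in\phi^{-1}(R_X)$, where the integer residues $N\alpha^i_x$ of the pulled-back connection are cancelled by the twist relating $E$ to $\phi^*V$; the paper packages this by restricting the connection $\overline{D}''$ on $\phi^*(V\otimes\mathcal{O}_X(R_X))$ to the subsheaf $E=\bigcap_j F_j$ and then invoking a rank-one local computation, while you do the whole thing in explicit local frames and glue. Two points in your write-up need repair, though. First, the direction of the twist is stated backwards: near $y$ one has $\phi^*V\subset E\subset \phi^*(V\otimes\mathcal{O}_X(R_X))$, and the local generator of the $i$-th summand of $E$ is $z^{-N\alpha^i_x}$ times the pulled-back adapted section of $V$, not $z^{+N\alpha^i_x}$ times it (compare the paper's local model, where the invariant section downstairs is $z^m\cdot e$, so $e=z^{-m}\cdot\phi^*s$). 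Had the relation really been $e_i=z^{+N\alpha^i_x}\phi^*s_i$, your gauge change would double the pole rather than kill it; fortunately the gauge factor $z^{-N\alpha^i_x}$ you actually apply is the one matching the correct identification, so this is an internal inconsistency to fix rather than a collapse of the argument.

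Second, and more seriously, the sentence ``at worst a simple pole'' is not good enough where it matters: at $\phi^{-1}(R_X)$ the extension must be \emph{holomorphic}, since $S_X=\Delta\setminus R_X$ and the target is $E\otimes K_Y\otimes\mathcal{O}_Y(\phi^{-1}(S_X)_{\rm red})$, which allows no pole over $R_X$; as written, your treatment of the correction terms only bounds them by a simple pole, and then the assertion that ``the pole disappears'' is justified only for the diagonal (graded) part. The missing step is the exponent count: in a frame of $V$ adapted to the splitting, $D'=d+M(w)\,dw/w$ with $M(0)={\rm Res}(D',x)$ block-diagonal with scalar blocks $\alpha^i_x\,{\rm Id}$ (this is exactly where hypotheses (1) and (2) enter: all entries of $M(0)$ outside the scalar diagonal vanish). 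After pulling back and applying the diagonal gauge ${\rm diag}(z^{-m_j})$ with $m_j=N\alpha^{i(j)}_x$, the $(j,k)$ entry of the connection matrix is $\bigl(N\,M_{jk}(z^N)-\delta_{jk}m_j\bigr)z^{\,m_j-m_k}\,dz/z$; the diagonal terms are $O(z^{N-1})\,dz$ because $N\,M_{jj}(0)=m_j$, and the off-diagonal terms are $O(z^{\,N-1+m_j-m_k})\,dz$ with $m_j-m_k\geq -(N-1)$ since all weights lie in $[0,1)$, hence holomorphic. With that bookkeeping added the local extension is genuinely nonsingular over $\phi^{-1}(R_X)$ and your gluing step finishes the proof; note also that you do not need the strong non-resonant normal form you invoke (full diagonalization of $D'$) --- an adapted holomorphic frame suffices, although non-resonance does hold here because the weights lie in $[0,1)$.
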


\begin{proof}
Let $\phi^*D'$ be the pulled back logarithmic connection on $\phi^* V$ whose 
singularity is contained in $\phi^{-1}(\Delta)$. The line bundle
$$
\phi^*{\mathcal O}_X(R_X)\, =\, {\mathcal O}_Y(N\cdot\phi^{-1}(R_X)_{\rm red})
$$
has a canonical logarithmic connection defined by the de Rham differential
$\beta\, \longmapsto\, d\beta$; this connection is singular over 
$\phi^{-1}(R_X)_{\rm red}$. Therefore, $D''$ produces a logarithmic connection
$$
\overline{D}''\, :\, \phi^*(V\otimes {\mathcal O}_X(R_X))\, \longrightarrow\,
\phi^*(V\otimes {\mathcal O}_X(R_X))\otimes K_Y\otimes {\mathcal 
O}_Y(\phi^{-1}(\Delta)_{\rm red})\, .
$$

The vector bundle $\phi^*(V\otimes {\mathcal O}_X(R_X))$ has a tautological
action of $\Gamma$ because it is the pullback of a vector bundle on $Y/\Gamma
\,=\, X$. The $\Gamma$--equivariant vector bundle $(E\, ,\rho)$ corresponding 
to $V_*$ is 
the intersection of certain $\Gamma$--invariant subsheaves of $\phi^*(V\otimes 
{\mathcal O}_X(R_X))$ (see \cite[p. 313, (3.3)]{Bi} and
the definition of $W$ in \cite[p. 313]{Bi}). In other words, $(E\, ,\rho)$ is of 
the form
$$
E\, =\, \bigcap_j F_j\, \subset\, \phi^*(V\otimes {\mathcal O}_X(R_X))\, ,
$$
where each $F_j$ is a $\Gamma$--invariant subsheaf of $\phi^*(V\otimes
{\mathcal O}_X(R_X))$.

Since ${\rm Res}(D'\, ,x)$ preserves each subspace $W^i_x$, it follows that 
${\rm Res}(D'\, ,x)$ preserves the quasiparabolic filtration in \eqref{f1}.
Also, ${\rm Res}(D'\, ,x)$ acts on each $W^i_x$ in \eqref{f3} as multiplication 
by a constant scalar. From these it follows that each $F_j$ is preserved by the 
logarithmic connection $\overline{D}''$ (see \cite[p. 313, (3.3)]{Bi}). Hence the 
connection $\overline{D}''$ on $E\vert_{Y\setminus \phi^{-1}(\Delta)}$ defined earlier 
induces the logarithmic connection 
$D=\overline{D}''|_E$ on $E$; the singular locus of $D$ is contained in $\phi^{-1}(\Delta)$.

To complete the proof, we need to show that $D$ is nonsingular in a neighborhood 
of $\phi^{-1}(R_X)$. For this we will use a local model of $\phi$.

Let $\mathbb D$ and $f_n$ be as in \eqref{d} and \eqref{d2} respectively.
Let $L$ be a holomorphic line bundle over $f_n(\mathbb D)$, and let
$$
D_L\, :\, L\, \longrightarrow\, L\otimes K_{\mathbb D}\otimes{\mathcal 
O}_{\mathbb D}(0)
$$
be a logarithmic connection with residue $m/n$ at $0\,\in\, f_n(\mathbb D)$, 
where $m$ is some integer. Then $f^*_nD_L$ is a logarithmic connection on $f^*_n 
L$ singular at $0$, and the residue of this connection at $0\, \in\,
\mathbb D$ is $m$. Consider
the holomorphic line bundle $(f^*_n L)\otimes {\mathcal O}_{\mathbb D}(m'\cdot
\{0\})$. The logarithmic connection $f^*_nD_L$ on $f^*_n L$, and the 
logarithmic connection on ${\mathcal O}_{\mathbb D}(m'\cdot\{0\})$ given by the 
de Rham differential, together define a logarithmic connection $\widetilde D$ 
on $(f^*_n L)\otimes {\mathcal O}_{\mathbb D}(m'\cdot \{0\})$. Since the 
residue of the de Rham connection on ${\mathcal O}_{\mathbb D}(m'\cdot \{0\})$ 
at $0$ is $-m'$, and the residue of $f^*_nD_L$ at $0$ is $m$, it follows that 
the residue of $\widetilde D$ at $0$ is $m-m'$. In particular, $\widetilde D$ 
is a nonsingular connection if $m\,=\, m'$. 

Using these facts repeatedly, it is straight-forward to deduce that the 
logarithmic connection $D$ on $E$ is nonsingular in a neighborhood of 
$\phi^{-1}(R_X)$.
\end{proof}

\section{Action of finite order line bundles}\label{se3}

\subsection{Fixed points of the action}\label{se3.1}

Let $X$ be a compact connected Riemann surface. If $E$ is a holomorphic vector
bundle of rank $r$ on $X$, and $L$ is a holomorphic line bundle on $X$, such 
that $E\otimes L$ is holomorphically isomorphic to $E$, then taking top 
exterior products we see that $(\bigwedge^r E)\otimes L^{\otimes r}$ is 
isomorphic to $\bigwedge^r E$, implying that the line bundle $L^{\otimes r}$ is 
holomorphically trivial.

Let $L$ be a nontrivial holomorphic line bundle on $X$ such that
$L^{\otimes r}$ is holomorphically trivial. Fix a holomorphic isomorphism of
$L^{\otimes r}$ with ${\mathcal O}_X$. Then there is a unique holomorphic 
connection $D_L$ on $L$ such that the above isomorphism takes the connection on 
$L^{\otimes r}$ induced by $D_L$ to the de Rham connection on ${\mathcal O}_X$ 
defined by $\beta\, \longmapsto\, d\beta$. Since any two isomorphisms between
$L^{\otimes r}$ and ${\mathcal O}_X$ differ by multiplication with a nonzero 
constant, the connection $D_L$ is independent of the choice of the
isomorphism between $L^{\otimes r}$ and ${\mathcal O}_X$.

Let $n$ be the order of $L$. So $n\, >\, 1$, and it is a divisor of $r$. Fix a
holomorphic isomorphism between $L^{\otimes n}$ and ${\mathcal O}_X$.

Let $s$ be the nonzero holomorphic section of $L^{\otimes n}$ given by the
constant function $1$ using the above isomorphism between $L^{\otimes n}$ and 
${\mathcal O}_X$. Define
\begin{equation}\label{g4}
Y_L\, :=\, \{z\in L\,\mid\, z^{\otimes n}\, \in\,
\text{image}(s)\}\, \subset\, L\, .
\end{equation}
Let
\begin{equation}\label{g5}
\varphi\, :\, Y_L\, \longrightarrow\, X
\end{equation}
be the restriction of the natural projection $L\, \longrightarrow
\,X$. The curve $Y_L$ is irreducible because the order of $L$ is $n$.

Since $\varphi$ is \'etale, if $(V\, ,D_V)$ is a logarithmic connection on 
$Y_L$ singular over $S\, \subset\, Y$, then $D_V$ induces a logarithmic 
connection on $\varphi_* V$ singular over $\varphi(S)$. This induced 
logarithmic connection on $\varphi_* V$ will be denoted by $\varphi_* D_V$.

\begin{theorem}\label{thm1}
Take a pair $(E\, ,D)$, where $E$ is a holomorphic vector bundle of rank $r$ 
on $X$ and $D$ is a logarithmic connection on $E$, such that the connection
$(E\otimes L\, ,D\otimes{\rm Id}_L+{\rm Id}_E\otimes D_L)$ is isomorphic to
$(E\, ,D)$. Then there is a vector bundle $V$ on $Y_L$ of rank $r/n$, and a 
logarithmic connection $D_V$ on $V$, such that $(\varphi_*V\, , \varphi_* D_V)$
is isomorphic to $(E\, ,D)$.
\end{theorem}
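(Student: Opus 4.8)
The plan is to imitate the classical argument of Narasimhan--Ramanan/Biswas--Ramanan for the corresponding statement about vector bundles, adapting it so that the connection is carried along at every step. Let $\psi\, :\,(E\, ,D)\,\stackrel{\sim}{\longrightarrow}\,(E\otimes L\, ,D\otimes{\rm Id}_L+{\rm Id}_E\otimes D_L)$ be an isomorphism of logarithmic connections. First I would promote $\psi$ to a structure of module over the sheaf of algebras $\mathcal{A}\,:=\,\bigoplus_{i=0}^{n-1}L^{\otimes i}$ on $E$; the only subtlety is that $\psi^{\otimes n}$ need not be the identity, but it is an automorphism of $(E\, ,D)$, hence (after twisting $\psi$ by a suitable locally constant, in fact constant, scalar) one may arrange $\psi^{\otimes n}\,=\,{\rm Id}_E$, using that $\mathrm{Aut}(E\, ,D)$ contains the scalars. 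Once $E$ is an $\mathcal{A}$-module, the relative-Spec construction identifies $\mathcal{A}$ with $\varphi_*\mathcal{O}_{Y_L}$, so $E\,=\,\varphi_*V$ for a unique vector bundle $V$ on $Y_L$ of rank $r/n$; this is exactly the step where \eqref{g4}--\eqref{g5} is used.

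Next I would transport the connection. The key point is that the two sides of the isomorphism $\psi$ differ precisely by $D_L$, and $D_L$ is the connection on $L$ inducing the trivial (de Rham) connection on $L^{\otimes n}\,=\,\mathcal{O}_X$; so the induced connection on $\mathcal{A}\,=\,\bigoplus L^{\otimes i}$ is compatible with the algebra structure (Leibniz for the multiplication $L^{\otimes i}\otimes L^{\otimes j}\to L^{\otimes i+j}$ holds because the residues/curvatures add up correctly, and on $L^{\otimes n}$ one lands on the de Rham connection, which is the unit). Concretely, $D$ together with $\psi$ makes $D$ a connection on $E$ viewed as an $\mathcal{A}$-module which is a derivation relative to the de Rham connection on $\mathcal{A}$. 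Since $\varphi$ is \'etale, $\varphi^*K_X\otimes\mathcal{O}_X(\Delta)\,=\,K_{Y_L}\otimes\mathcal{O}_{Y_L}(\varphi^{-1}(\Delta))$, and descending such a relative connection along the finite \'etale map $\varphi$ yields a logarithmic connection $D_V$ on $V$ with singular set $\varphi^{-1}(S_D)$, where $S_D$ is the singular set of $D$. By construction $\varphi_*D_V$ is the connection on $\varphi_*V\,=\,E$ recovered from $D$, so $(\varphi_*V\, ,\varphi_*D_V)\,\simeq\,(E\, ,D)$.

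I expect the main obstacle to be the normalization $\psi^{\otimes n}\,=\,{\rm Id}$ together with the verification that the de Rham connection on $\mathcal{A}$ is a ring derivation making $E$ a \emph{connection-compatible} $\mathcal{A}$-module; one has to check that the identification $L^{\otimes n}\,\cong\,\mathcal{O}_X$ is flat for the chosen connections (which is the defining property of $D_L$) and that multiplication maps between the summands $L^{\otimes i}$ are horizontal. This is where the hypothesis that $D_L$ is the \emph{specific} connection inducing the trivial connection on $L^{\otimes n}$ is essential: for any other connection on $L$ the algebra $\mathcal{A}$ would fail to carry a compatible flat structure, and the descent of $D$ would not close up. Everything else---irreducibility of $Y_L$, \'etaleness of $\varphi$, the rank count, and the Leibniz identity for $D_V$---is routine once the $\mathcal{A}$-module picture with its flat structure is in place.
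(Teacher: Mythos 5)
Your normalization step contains a genuine gap. You set $\psi^{(n)}:=(\psi\otimes\mathrm{Id}_{L^{\otimes (n-1)}})\circ\cdots\circ(\psi\otimes\mathrm{Id}_L)\circ\psi\colon E\to E\otimes L^{\otimes n}\cong E$ and claim that, since this is an automorphism of $(E,D)$, it can be made the identity by multiplying $\psi$ by a (locally) constant \emph{scalar}. Rescaling $\psi$ by $c$ replaces $\psi^{(n)}$ by $c^{n}\psi^{(n)}$, so your twist works only if $\psi^{(n)}$ is already a scalar multiple of the identity. That is automatic when $(E,D)$ is simple, but Theorem \ref{thm1} makes no irreducibility or stability hypothesis: for example, if $(E,D)=(E_0,D_0)^{\oplus 2}$ with $(E_0,D_0)$ a fixed point and $\psi=\psi_0\oplus\lambda\psi_0$ with $\lambda^{n}\neq 1$, then $\psi^{(n)}=\mathrm{Id}\oplus\lambda^{n}\mathrm{Id}$ and no scalar rescaling repairs it. Without $\psi^{(n)}=\mathrm{Id}$ your $\mathcal{A}$-module structure on $E$ (with $\mathcal{A}\cong\varphi_*\mathcal{O}_{Y_L}$) is simply not defined, so this is not a cosmetic point. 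The correct fix is the one the paper uses, following \cite{BP}: the group of $D$-preserving automorphisms of $E$ is divisible, so one may choose $B$ with $B^{n}=(\psi^{(n)})^{-1}$, and one may take $B$ to be a polynomial in $\psi^{(n)}$; then $B$ is $D$-flat, satisfies $(B\otimes\mathrm{Id}_L)\circ\psi=\psi\circ B$ (because $(\psi^{(n)}\otimes\mathrm{Id}_L)\circ\psi=\psi\circ\psi^{(n)}$), and replacing $\psi$ by $\psi\circ B$ gives $\psi^{(n)}=\mathrm{Id}$ while keeping the intertwining with the connections. With that repair, your argument goes through.

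Apart from this, your route is essentially the paper's proof in module-theoretic clothing. The paper fixes such a normalized flat isomorphism $H$ as in \eqref{h1}, realizes $V$ concretely as the kernel subbundle \eqref{y} of $\varphi^*H-\theta$ inside $\varphi^*E$, and shows it is preserved by $\varphi^*D$ by proving that the tautological section $\sigma\in \mathrm{H}^0(Y_L,\varphi^*L)$ is flat for $\varphi^*D_L$ (using that $\sigma^{\otimes n}$ is the pullback of the constant section of $L^{\otimes n}=\mathcal{O}_X$ and that $D_L$ induces the de Rham connection there). Your verification that the multiplication maps $L^{\otimes i}\otimes L^{\otimes j}\to L^{\otimes i+j}$ and the identification $L^{\otimes n}\cong\mathcal{O}_X$ are horizontal is exactly the same computation; descent of the compatible connection along the finite \'etale map $\varphi$ then produces $D_V$ with $\varphi_*D_V=D$, just as taking the induced connection on the kernel does in the paper. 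Your formulation is cleaner to state; the paper's is more explicit about where $V$ and $D_V$ live, which is convenient for the converse statement proved right after.
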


\begin{proof}
Since the holomorphic vector bundle $E\otimes L$ is holomorphically isomorphic 
to $E$, there is a holomorphic 
vector bundle $V\, \longrightarrow\, Y_L$ such that $\varphi_*V\,=\, E$ \cite[p. 
499, Lemma 2.1]{BP}. We will briefly recall the construction of $V$.

Fix a holomorphic isomorphism
\begin{equation}\label{h1}
H\, :\, E\, \longrightarrow\, E\otimes L
\end{equation}
such that the composition
$$\begin{xy} \xymatrix{
E \ar[r]^{\hskip-10pt H} & E\otimes L \ar[rr]^{H\otimes {Id}_L}&& E\otimes L^{\otimes 2}
\ar[rr]^{ \ \ H\otimes {Id}_{L^{\otimes 2}}} &&
\cdots \ar[rr]^{H\otimes {Id}_{L^{\otimes (n-1)\ \ Ê\textrm{ }}}}&&
E\otimes L^{\otimes n}}
\end{xy}$$
is the identity map with respect to the trivialization of $L^{\otimes n}$.
(Since the group $\text{Aut}(E)$ is divisible, such an isomorphism exists.)

Consider the holomorphic vector bundle $\varphi^*E$ on $Y_L$,
where $\varphi$ is the projection in \eqref{g5}. Let
\begin{equation}\label{h2}
V\, \subset\, \varphi^*E
\end{equation}
be the holomorphic subbundle such that for any $y\, \in\, Y_L$, the subspace
$V_y\,\subset\, (\varphi^*E)_y\,=\, E_{\varphi(y)}$ coincides with the 
kernel of the homomorphism
\begin{equation}\label{y}
H- \text{Id}_{E_{\varphi(y)}}\otimes y\,:\, E_{\varphi(y)}\, \longrightarrow
\,E_{\varphi(y)}\otimes L_{\varphi(y)}\,=\,(E\otimes L)_{\varphi(y)}
\end{equation}
(note that $y\, \in\, L_{\varphi(y)}$). The direct image $\varphi_*V$ is 
canonically identified with $E$ (see \cite[p. 499, Lemma 2.1]{BP}).

Now assume that the isomorphism $H$ in \eqref{h1} takes the logarithmic 
connection $D$ on $E$ to the logarithmic connection 
$D\otimes{\rm Id}_L+{\rm Id}_E\otimes D_L$ on $E\otimes L$. Such an 
automorphism exists because the group of all automorphisms of $E$ preserving
$D$ is divisible.

Consider the logarithmic connection $\varphi^*D$ on $\varphi^*E$. We will 
show that the subbundle $V$ in \eqref{h2} is preserved by $\varphi^*D$.

Let $0_X\, \subset\, L$ be the
image of the zero section of $L$. We note that the pullback of $L$ to
the total space of $L$ has a tautological section, and this section vanishes
exactly on $0_X$. Since $Y_L\, \subset\, L\setminus 0_X$, we get a tautological 
nowhere vanishing section
\begin{equation}\label{sigm}
\sigma \, \in\, \mathrm{H}^0(Y_L,\, \varphi^*L)\, .
\end{equation}
Let $\varphi^*D_L$ be the \emph{holomorphic} connection on $\varphi^*L$ obtained by 
pulling back the connection $D_L$ on $L$.

We will show that the section $\sigma$ in \eqref{sigm} is flat with 
respect to the connection $\varphi^*D_L$. For this, consider the holomorphic 
section
$$
\sigma^{\otimes n} \, \in\, \mathrm{H}^0(Y_L,\, (\varphi^*L)^{\otimes n})
\,=\, \mathrm{H}^0(Y_L,\, \varphi^*(L^{\otimes n}))\, .
$$
This section is nonzero because $\sigma$ is nonzero. On the other hand,
$L^{\otimes n}\,=\, {\mathcal O}_X$. Therefore, $\sigma^{\otimes n}$ is
the pullback of a nonzero constant section of $L^{\otimes n}\,=\, {\mathcal 
O}_X$. 
Recall that the connection $D_L$ is uniquely determined by the condition that 
the induced connection on $L^{\otimes n}$ coincides with the trivial connection
on ${\mathcal O}_X$ (given by the de Rham differential). Therefore, we conclude 
that $\sigma$ is flat for the connection $\varphi^*D_L$.

The section $\sigma$ sends any $y\,\in\, Y_L\, \subset\, L$ to $y$. Since
$\sigma$ is flat for the connection $\varphi^*D_L$, it follows
immediately that the homomorphism
$$
\theta\, :\, \varphi^*E \, \longrightarrow\, \varphi^*(E\otimes L)
$$
defined by $v\,\longmapsto\, v\otimes y$, where $v\, \in\, (\varphi^*E)_y$,
takes the logarithmic connection $\varphi^*D$ to the pulled back 
logarithmic connection $\varphi^*(D\otimes{\rm Id}_L+{\rm Id}_E\otimes D_L)$ on 
$\varphi^*(E\otimes L)$. The isomorphism 
$\varphi^*H$ also takes $\varphi^*D$ to $\varphi^*(D\otimes{\rm Id}_L+{\rm 
Id}_E\otimes D_L)$, because $H$ takes $D$ to $D\otimes{\rm 
Id}_L+{\rm Id}_E\otimes D_L$. Therefore, $\varphi^*H-\theta$ takes $\varphi^*D$ 
to $\varphi^*(D\otimes{\rm Id}_L+{\rm Id}_E\otimes D_L)$. Hence
$$
\text{kernel}(\varphi^*H-\theta)\, \subset\, \varphi^*E
$$
is preserved by $\varphi^*D$. But $V\,=\, \text{kernel}(\varphi^*H-\theta)$
(see \eqref{y}).

Consequently, $\varphi^* D$ induces a logarithmic connection on $V$;
we will denote this logarithmic connection on $V$ by $D_V$. Since
$\varphi_*V \,=\, E$, from the construction of $D_V$ it follows immediately 
that $\varphi_*D_V\, =\, D$.
\end{proof}

The converse of Theorem \ref{thm1} is also true:

\begin{proposition}
Let $(V\, , D_V)$ be a holomorphic vector bundle of rank $r/n$ on $Y_L$ 
equipped with a logarithmic connection $D_V$. Then the logarithmic connections 
$(\varphi_*V\, , \varphi_*D_V)$ and $((\varphi_*V)\otimes L\, , (\varphi_* 
D_V)\otimes \mathrm{Id}_L+\mathrm{Id}_{\varphi_*V}\otimes D_L)$ are isomorphic.
\end{proposition}

\begin{proof}
There is a natural holomorphic isomorphism of vector bundles
$E\, \longrightarrow\, (\varphi_*V)\otimes L$ \cite[p. 499, Proposition 
2.2(1)]{BP}. It is straight-forward to check that this isomorphism takes
the logarithmic connection $\varphi_*D_V$ to $(\varphi_* D_V)\otimes 
\mathrm{Id}_L+\mathrm{Id}_{\varphi_*V}\otimes D_L$.
\end{proof}

\subsection{Action on the cohomology of the moduli spaces}\label{se3.2}

Let $X$ be a compact connected Riemann surface of genus at least two. Fix a 
point 
$x_0\, \in\, X$. Fix an integer $r\, \geq\, 2$ together with an integer $d$ 
coprime 
to $r$. Fix a holomorphic line bundle $\xi$ on $X$ of degree $d$. Fix a 
logarithmic connection $D_\xi$ on $\xi$ singular exactly over $x_0$. Since
$\text{degree}(\xi)+\text{Res}(D_\xi,x_0)\,=\, 0$ \cite[p. 16, Theorem 3]{Oh}, 
it follows that $\text{Res}(D_\xi,x_0)\,=\, -d$.

Let ${\mathcal M}(r,D_\xi)$ denote the moduli space of logarithmic connections 
$(E\, ,D)$ on $X$ satisfying the following conditions:
\begin{itemize}
\item $\text{rank}(E)\,=\, r$,

\item $\bigwedge^r E\,=\, \xi$,

\item the connection $D$ is singular exactly over $x_0$,

\item $\text{Res}(D,x_0)\,=\, -\frac{d}{r}\text{Id}_{E_{x_0}}$, and

\item the logarithmic connection on $\bigwedge^r E\,=\,\xi$ induced by $D$ 
coincides with $D_\xi$.
\end{itemize}
(See \cite{Ni}, \cite{Si} for the construction of the moduli space.)

Take any $(E\, ,D)\,\in\, {\mathcal M}(r,D_\xi)$. If $F$ is a holomorphic 
subbundle of $E$ of positive rank which is preserved by $D$, then
$$
\text{degree}(F)+\text{trace}(\text{Res}(D\vert_F\, ,x_0))\,=\,
\text{degree}(F) -\frac{d}{r}\text{rank}(F)\,=\, 0
$$
\cite[p. 16, Theorem 3]{Oh}. Since $d$ is coprime to $r$, this implies that
$r\,=\, \text{rank}(F)$. Hence the logarithmic connection $(E\, ,D)$ is stable.
Therefore, the moduli space ${\mathcal M}(r,D_\xi)$ is smooth.

Let $L$ be a holomorphic line bundle over $X$ such that $L^{\otimes r}\,=\,
{\mathcal O}_X$. Let $D_L$ be the holomorphic connection on $L$ constructed in 
Section \ref{se3.1}. Let
\begin{equation}\label{pl}
\Phi\, :\, {\mathcal M}(r,D_\xi)\, \longrightarrow\, {\mathcal M}(r,D_\xi)
\end{equation}
be the automorphism defined by $(E\, ,D)\, \longmapsto\, (E\otimes L\, 
,D\otimes{\rm Id}_L+{\rm Id}_E\otimes D_L)$.

\begin{proposition}\label{prop3}
The homomorphism $\Phi^*\, :\, \mathrm{H}^*({\mathcal M}(r,D_\xi),\, {\mathbb Q})\, 
\longrightarrow\, \mathrm{H}^*({\mathcal M}(r,D_\xi),\, {\mathbb Q})$ induced by $\Phi$ 
is the identity map.
\end{proposition}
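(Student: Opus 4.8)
The plan is to pin down a set of $\mathbb{Q}$-algebra generators of $\mathrm{H}^*(\mathcal{M}(r,D_\xi),\mathbb{Q})$ that $\Phi^*$ manifestly fixes. Since $r$ is coprime to $d$, every object of $\mathcal{M}(r,D_\xi)$ is stable (hence simple), the moduli space is smooth, and it carries a universal family up to a twist by a line bundle pulled back from the moduli factor; equivalently, there is a canonically defined universal endomorphism bundle $\mathrm{End}(\mathcal{U})$ on $X\times\mathcal{M}(r,D_\xi)$ (the adjoint of the universal $\mathrm{PGL}_r$--bundle, plus a trivial summand), carrying a universal logarithmic connection along $X$. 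The first step is to observe that $\Phi$ leaves $\mathrm{End}(\mathcal{U})$ invariant: by construction $(\mathrm{Id}_X\times\Phi)^*\mathcal{U}$ restricts over each slice $X\times\{(E,D)\}$ to $E\otimes L$, so it agrees fibrewise with $\mathcal{U}\otimes p_X^*L$, whence $(\mathrm{Id}_X\times\Phi)^*\mathcal{U}\,\simeq\,\mathcal{U}\otimes p_X^*L\otimes p_{\mathcal{M}}^*N$ for some line bundle $N$ on $\mathcal{M}(r,D_\xi)$. Applying $\mathrm{End}(-)$ annihilates both line-bundle twists, so $(\mathrm{Id}_X\times\Phi)^*\mathrm{End}(\mathcal{U})\,\simeq\,\mathrm{End}(\mathcal{U})$, and the same computation shows this isomorphism intertwines the universal connections (only the holomorphic statement is needed here).

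Next I would bring in the tautological classes. For $\alpha\in\mathrm{H}_*(X,\mathbb{Q})$ and $k\geq 1$, set $\tau_k(\alpha):=p_{\mathcal{M}*}\big(\mathrm{ch}_k(\mathrm{End}(\mathcal{U}))\cap p_X^*\mathrm{PD}(\alpha)\big)\in\mathrm{H}^*(\mathcal{M}(r,D_\xi),\mathbb{Q})$, i.e. the Künneth (slant) components of the Chern character of the universal endomorphism bundle. By the previous step $\mathrm{ch}(\mathrm{End}(\mathcal{U}))$ is $(\mathrm{Id}_X\times\Phi)^*$--invariant, and since $\mathrm{Id}_X\times\Phi$ acts as the identity on the $X$--factor in homology, one gets $\Phi^*\tau_k(\alpha)=\tau_k(\alpha)$ for all $k$ and all $\alpha$. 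Thus $\Phi^*$ fixes every tautological class. The proposition then reduces to the statement that the tautological classes generate $\mathrm{H}^*(\mathcal{M}(r,D_\xi),\mathbb{Q})$ as a $\mathbb{Q}$--algebra: indeed $\Phi^*$ is an algebra homomorphism fixing a generating set, hence is the identity. In the coprime case one can obtain the generation statement by transporting, through the non-abelian Hodge correspondence (a $C^\infty$ isomorphism onto the corresponding moduli space of Higgs bundles which matches the underlying $C^\infty$ bundles, hence the Chern classes of the universal families), the theorem of Markman that the cohomology of a moduli space of stable sheaves on the cotangent bundle $T^*X$ is generated by tautological classes.

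The main obstacle is precisely this last ingredient — the generation of $\mathrm{H}^*(\mathcal{M}(r,D_\xi),\mathbb{Q})$ by tautological classes — together with the (mild) fact that one has a universal family, or at least a universal $\mathrm{PGL}_r$--bundle, on $X\times\mathcal{M}(r,D_\xi)$; everything else is formal. If one prefers to avoid non-abelian Hodge theory, the argument can instead be run on the Dolbeault side, where the twisting object is the Higgs bundle $(L,0)$: its characteristic polynomial is trivial, so the induced automorphism commutes with the Hitchin map, acts trivially on the Hitchin base, and acts on each smooth Hitchin fibre by translation by the torsion point $\pi_s^*L$ of the relevant Jacobian; translation by a section of an abelian scheme is trivial on rational cohomology (by functoriality and degeneration of the Leray spectral sequence), and this is propagated to the whole moduli space via the decomposition theorem for the proper Hitchin map. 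In either route, the non-formal input is a structural fact about the cohomology of these moduli spaces, and that is where the real work sits.
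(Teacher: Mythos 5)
Your first route is, in outline, the strategy of the paper's own proof: pass to the Higgs side through the non-abelian Hodge correspondence (where the twisting object becomes $(L,0)$ because $D_L$ is unitary with finite monodromy), and conclude from generation of the cohomology by tautological classes together with $c_1(L)=0$. But the way you have split the argument opens a genuine gap. Your invariance half is watertight only for $\mathrm{End}(\mathcal{U})$: for a universal bundle itself one only gets $(\mathrm{Id}_X\times\Phi)^*\mathcal{U}\simeq\mathcal{U}\otimes p_X^*L\otimes p_{\mathcal{M}}^*N$, and $c_1(N)$ need not vanish. Your generation half, however, is not available for $\mathrm{End}$-classes on this space: Markman's theorem concerns moduli of sheaves with unconstrained determinant and the K\"unneth components of the Chern classes of a universal sheaf, whereas $\mathcal{M}(r,D_\xi)\simeq\mathcal{M}_H(r,\xi)$ is the fixed-determinant, trace-free (SL-type) space. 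Generation by classes invariant under tensoring by every $r$-torsion line bundle is at least as strong as the statement being proved (it would make the whole $\mathrm{Jac}(X)[r]$-action trivial on $\mathrm{H}^*$), so as written the reduction assumes what is to be shown; and this input cannot simply be imported, because for the SL-type Higgs space invariant classes do not generate. Concretely, for $r=2$ and genus $2$ the $\mathbb{C}^*$-fixed locus of $\mathcal{M}_H(2,\xi)$ contains, besides the moduli of stable bundles, a component $F$ that is a connected unramified $\mathrm{Jac}(X)[2]$-Galois cover of $X$ of genus $17$, on which tensoring by $2$-torsion acts by deck transformations; the Morse--Bia{\l}ynicki-Birula stratification is equivariantly perfect over $\mathbb{Q}$, so $\mathrm{H}^5(\mathcal{M}_H(2,\xi),\mathbb{Q})\cong \mathrm{H}^1(F,\mathbb{Q})$ is $34$-dimensional (Hitchin's $b_5=34$) with $\mathrm{Jac}(X)[2]$-invariants only $\mathrm{H}^1(X,\mathbb{Q})$, and any nontrivial $2$-torsion $L$ has a nonzero $(-1)$-eigenspace there. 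This is also the delicate point of the paper's own argument, which quotes [Ma], [HT] for the fixed-determinant space and tacitly identifies $(\mathrm{Id}\times\Phi_H)^*\mathcal{E}$ with $\mathcal{E}\otimes p_X^*L$ without a twist from the moduli factor; you have located the crux correctly, but neither the citation nor the formal part of your argument closes it.

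Your alternative Dolbeault route stalls at the propagation step. Since the automorphism commutes with the Hitchin map and acts on a smooth fibre by translation, it does act trivially on the restriction of the derived pushforward to the locus of smooth spectral curves, hence on every summand of the decomposition theorem with full support (an endomorphism of such an intersection complex is determined by its restriction to a dense open set). But for the fixed-determinant Hitchin map the decomposition contains summands supported on proper closed subsets of the base (endoscopic loci), and translation-triviality on smooth fibres gives no control of the action on those; that is exactly where the non-invariant cohomology exhibited above lives. So in both routes the ``structural fact'' you defer to is not a citation away: it is the actual content of the proposition, and the proposal as written does not establish it.
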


\begin{proof}
Let ${\mathcal M}_H(r,\xi)$ be the moduli space of Higgs bundles $(V\, 
,\theta)$ over $X$ such that $\text{rank}(V)\,=\, r$, $\bigwedge^r V\,=\,\xi$, 
and $\text{trace}(\theta)\,=\,0$. The moduli space ${\mathcal M}(r,D_\xi)$ is 
canonically diffeomorphic to ${\mathcal M}_H(r,\xi)$ (but this diffeomorphism 
is not holomorphic) \cite{Si2}.

The flat connection $D_L$ has finite monodromy group because the connection on
$L^{\otimes n}\,=\, {\mathcal O}_X$ induced by $D_L$ has trivial monodromy.
In particular, the connection $D_L$ is unitary. Therefore, the Higgs line bundle
corresponding to the flat line bundle $(L\, ,D_L)$ is the holomorphic line
bundle $L$ equipped with the zero Higgs field.

The holomorphic line bundle $L$ equipped with the zero Higgs field produces 
an automorphism
$$
\Phi_H\, :\, {\mathcal M}_H(r,\xi)\, \longrightarrow\, {\mathcal M}_H(r,\xi)
$$
defined by $(V\, ,\theta)\, \longmapsto\, (V\otimes L\, ,\theta\otimes 
\text{Id}_L)$. The correspondence  between flat connections and Higgs bundles is 
compatible with the operation of taking tensor products; see \cite[p. 15]{Si1}.
Consequently, the following diagram of maps
\begin{equation}\label{c}
\begin{xy}\xymatrix{
{\mathcal M}(r,D_\xi) \ar[r]^{\Phi} \ar[d]^{\wr}&{\mathcal 
M}(r,D_\xi)\ar[d]^\wr\\
{\mathcal M}_H(r,\xi) \ar[r]^{\Phi_H} &{\mathcal M}_H(r,\xi)}
\end{xy}
\end{equation}
is commutative. 

There is a universal vector bundle $\mathcal E$ over $X\times {\mathcal 
M}_H(r,\xi)$ because $d$ is coprime to $r$. The cohomology algebra 
$\mathrm{H}^*({\mathcal M}_H(r,\xi),\, {\mathbb Q})$ is generated by the K\"unneth 
components of the Chern classes $c_i({\mathcal E})$, $i\, \geq\, 0$
\cite[p. 73, Theorem 7]{Ma}, \cite[p. 641, (6.1)]{HT}. Since $L$ is of finite 
order, we have $c_1(L)\,=\, 0$. Hence
$$
c_i({\mathcal E})\,=\, c_i({\mathcal E}\otimes p^*_X L), ~\, \forall ~ i\, 
\geq\, 0\, ,
$$
where $p_X$ is the projection of $X\times {\mathcal M}_H(r,\xi)$ to $X$.
Therefore, the automorphism $$\Phi^*_H\, :\, \mathrm{H}^*({\mathcal M}_H(r,\xi),\, 
{\mathbb Q})\, \longrightarrow\,\mathrm{H}^*({\mathcal M}_H(r,\xi),\, {\mathbb Q})$$
induced by $\Phi_H$ is the identity map on the above mentioned generators of 
$\mathrm{H}^*({\mathcal M}_{H}(r,\xi),\, {\mathbb Q})$. Hence $\Phi^*_H$ is the identity 
map. Now the proposition follows from the commutativity of the diagram in 
\eqref{c}.
\end{proof}

\section{A criterion for admitting a logarithmic connection}\label{se4}

Let $X$ be a compact connected Riemann surface. Fix a finite subset
$$
\Delta\,=\, \{x_1\, ,\cdots \, ,x_n\}\, \subset\, X \, .
$$
For each point $x_i\, \in\, \Delta$, fix $\lambda_i\, \in\, {\mathbb C}$.
Let $E$ be a holomorphic vector bundle on $X$. A holomorphic vector bundle $F$ 
on $X$ of positive rank is called a \textit{direct summand} of $E$ if there is 
a holomorphic vector bundle $F'$ of positive rank such that $F\oplus F'$ is
holomorphically isomorphic to $E$. The vector bundle $E$ is called 
\textit{indecomposable} if $E$ does not have any direct summand.

If $n\,=\, 0$, the following proposition coincides with the Atiyah--Weil 
criterion for the existence of a holomorphic connection on $E$, \cite{At}, 
\cite{We}.

\begin{proposition}\label{prop4}
There is a logarithmic connection $D$ on $E$ singular over $\Delta$ with 
residue
$$
{\rm Res}(D,x_i) \,=\, \lambda_i\cdot {\rm Id}_{E_{x_i}}
$$
for every $x_i\, \in\, \Delta$ if and only if the following two conditions hold:
\begin{enumerate}
\item ${\rm degree}(E)+ {\rm rank}(E)\cdot \sum_{i=1}^n \lambda_i\,=\, 0$, and

\item ${\rm degree}(F)+ {\rm rank}(F)\cdot \sum_{i=1}^n \lambda_i\,=\, 0$
for any direct summand $F$ of $E$.
\end{enumerate}
\end{proposition}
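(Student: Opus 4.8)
The plan is to reduce the statement to the classical Atiyah–Weil criterion by twisting $E$ with a suitable line bundle carrying a logarithmic connection of the right residues. Concretely, fix a logarithmic connection $D_0$ on the trivial line bundle ${\mathcal O}_X$ singular over $\Delta$ with $\mathrm{Res}(D_0,x_i)\,=\,\lambda_i$ for each $i$; such a $D_0$ exists precisely when $\sum_i\lambda_i\,=\,0$ is \emph{not} required—rather, a logarithmic connection on a line bundle $\eta$ singular over $\Delta$ with residues $\lambda_i$ exists if and only if $\mathrm{degree}(\eta)+\sum_i\lambda_i\,=\,0$, by the degree–residue identity cited above (\cite[p.\ 16, Theorem 3]{Oh}) together with the fact that the obstruction (an extension class in $\mathrm{H}^1(X,K_X)$) always vanishes for line bundles of the correct degree. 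So the first step is: choose a line bundle $\eta$ on $X$ with $\mathrm{degree}(\eta)\,=\,-\sum_i\lambda_i$ and a logarithmic connection $D_\eta$ on $\eta$ with residue $\lambda_i$ at each $x_i$. Here condition $(1)$ is exactly what makes $\eta$ and $E$ have compatible degrees after the twisting below.

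The second step is the observation that tensoring sets up a bijection: $E$ admits a logarithmic connection $D$ with $\mathrm{Res}(D,x_i)\,=\,\lambda_i\cdot\mathrm{Id}$ for all $i$ if and only if $E\otimes\eta^\vee$ admits a \emph{holomorphic} connection. Indeed, given such a $D$, the connection $D\otimes\mathrm{Id}_{\eta^\vee}+\mathrm{Id}_E\otimes D_\eta^\vee$ on $E\otimes\eta^\vee$ has residue $(\lambda_i-\lambda_i)\cdot\mathrm{Id}\,=\,0$ at each $x_i$, hence is holomorphic; conversely, from a holomorphic connection on $E\otimes\eta^\vee$ one recovers $D$ on $E\,=\,(E\otimes\eta^\vee)\otimes\eta$ by tensoring with $(\eta,D_\eta)$. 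This is the tensor-product formalism already used repeatedly in the paper (e.g.\ in Theorem \ref{thm1}), so the residue bookkeeping is routine.

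The third step applies the Atiyah–Weil criterion (\cite{At},\cite{We}) to $E\otimes\eta^\vee$: a holomorphic connection exists if and only if $\mathrm{degree}(G)\,=\,0$ for every indecomposable direct summand $G$ of $E\otimes\eta^\vee$. Since direct summands of $E\otimes\eta^\vee$ are exactly $F\otimes\eta^\vee$ for $F$ a direct summand of $E$ (tensoring by a line bundle is an autoequivalence preserving direct-sum decompositions), and $\mathrm{degree}(F\otimes\eta^\vee)\,=\,\mathrm{degree}(F)-\mathrm{rank}(F)\sum_i\lambda_i$, the Atiyah–Weil condition translates verbatim into conditions $(1)$ (taking $F\,=\,E$) and $(2)$. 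It suffices to check the vanishing on direct summands rather than on indecomposable ones, since an arbitrary direct summand is a sum of indecomposables and degree is additive; so the two formulations are equivalent.

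The main obstacle is the first step—producing $(\eta,D_\eta)$ with the prescribed residues. One must verify that for a line bundle of degree $-\sum_i\lambda_i$ the obstruction to a logarithmic connection with residues $\lambda_i$ really does vanish. The cleanest route: the sheaf of such logarithmic connections is a torsor under $\mathrm{H}^0(X,K_X\otimes{\mathcal O}_X(\Delta))$ once nonempty, and nonemptiness is governed by an Atiyah-type class in $\mathrm{H}^1(X,K_X)\,\cong\,{\mathbb C}$ whose image under the trace/residue map records $\mathrm{degree}(\eta)+\sum_i\lambda_i$; for line bundles this class is determined by its image, so it vanishes exactly when the degree condition holds. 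Alternatively, and more concretely, one builds $D_\eta$ by hand: choose meromorphic local trivializations and prescribe the connection form to have simple poles with residues $\lambda_i$, then correct the global discrepancy using the surjectivity of $\mathrm{H}^1(X,{\mathcal O}_X)\to\mathrm{Pic}^0$—this is exactly the line-bundle case of what Proposition \ref{prop4} asserts, so care is needed to avoid circularity; I would therefore prefer the torsor argument, which only uses the classical Atiyah class and Serre duality.
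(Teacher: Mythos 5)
There is a genuine gap at the very first step, and it is fatal to the whole reduction: you need a holomorphic line bundle $\eta$ on $X$ with $\mathrm{degree}(\eta)\,=\,-\sum_{i}\lambda_i$, but the $\lambda_i$ in Proposition \ref{prop4} are arbitrary \emph{complex} numbers, and even assuming condition (1) one only gets $\sum_i\lambda_i\,=\,-\,\mathrm{degree}(E)/\mathrm{rank}(E)$, which is rational but in general not an integer. Since degrees of line bundles are integers, no such $\eta$ exists unless $\mathrm{rank}(E)$ divides $\mathrm{degree}(E)$. A concrete instance covered by the proposition but inaccessible to your argument: $E$ indecomposable of rank $2$ and degree $1$, $\Delta\,=\,\{x_1\}$, $\lambda_1\,=\,-\tfrac12$; conditions (1) and (2) hold (condition (2) is vacuous), so a logarithmic connection with residue $-\tfrac12\,\mathrm{Id}$ exists, yet there is no line bundle of degree $\tfrac12$ to twist by. Nor can the residues be shifted to zero by adding a scalar logarithmic one-form $\omega\otimes\mathrm{Id}$, since the residue theorem forces $\sum_i\mathrm{Res}(\omega,x_i)\,=\,0$; so the non-integral (indeed non-zero) residue sum cannot be removed by any twist of this kind. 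Your remaining steps (the torsor/obstruction argument producing $D_\eta$ on a line bundle of the correct degree, the transfer of direct summands under tensoring by a line bundle, and the translation of the Atiyah--Weil degree conditions --- modulo a sign slip in $\mathrm{degree}(F\otimes\eta^\vee)$) are fine, so your proof is correct in the special case $\sum_i\lambda_i\,\in\,\mathbb{Z}$, but that is strictly weaker than the statement.

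This is also why the paper does not argue by twisting: it works with a residue-twisted Atiyah sequence directly, constructing a bundle ${\mathcal A}(E)$ whose splittings of the sequence \eqref{zh2} are exactly the logarithmic connections with residues $\lambda_i\cdot\mathrm{Id}$, and then shows the obstruction class $\theta\,\in\,\mathrm{H}^1(X,\mathrm{End}(E)\otimes K_X)\,=\,\mathrm{H}^0(X,\mathrm{End}(E))^*$ vanishes by reducing to $E$ indecomposable and evaluating $\theta$ on $\mathrm{Id}_E$ (using condition (1) via the determinant line bundle, where only the \emph{integer} quantity $\mathrm{degree}(E)+\mathrm{rank}(E)\sum_i\lambda_i$ appears) and on nilpotent endomorphisms (Atiyah's argument). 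If you want to salvage your strategy, you would have to run this obstruction computation for the twisted sequence itself rather than reduce to the untwisted Atiyah--Weil criterion; at that point you have essentially reproduced the paper's proof.
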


\begin{remark}
{\rm If all $\lambda_i$ are rational numbers, then Proposition \ref{prop4} is a 
special case of the main theorem in \cite{Bi3}.}
\end{remark}

\begin{proof}
First assume that there is a logarithmic connection $D$ on $E$ singular over 
$\Delta$ with
$$
{\rm Res}(D,x_i) \,=\, \lambda_i\cdot {\rm Id}_{E_{x_i}}
$$
for all $x_i\, \in\, \Delta$. From \cite[p. 16, Theorem 3]{Oh} we conclude that
${\rm degree}(E)+ {\rm rank}(E)\cdot \sum_{i=1}^n \lambda_i\,=\, 0$.

Let $F$ be a direct summand of $E$. Fixing a complement $F'$, and a holomorphic 
isomorphism of $F\oplus F'$ with $E$, let
$$
\iota\, :\, F\, \longrightarrow\, E ~\, ~\text{ and } ~\, ~
p\, :\, E\, \longrightarrow\, F
$$
be the inclusion and projection respectively. The composition
$$\begin{xy}\xymatrix{
F \ar[r]^{\iota}& E \ar[r]^{\hskip-35pt D} & E\otimes K_X\otimes {\mathcal O}_X(\Delta)
\ar[rr]^{{p\otimes \text{Id}_{K_{\hskip-2pt X}\otimes {\mathcal 
O}_{\hskip-1pt X}(\Delta)}}} && F\otimes K_X\otimes {\mathcal O}_X(\Delta)
}\end{xy}$$
is a logarithmic connection on $F$ singular over $\Delta$ with residue
$\lambda_i\cdot {\rm Id}_{F_{x_i}}$ for each $x_i\, \in\, \Delta$.
Therefore, from \cite[p. 16, Theorem 3]{Oh} we conclude that
${\rm degree}(F)+ {\rm rank}(F)\cdot \sum_{i=1}^n \lambda_i\,=\, 0$.

To prove the converse, we will construct a holomorphic vector bundle ${\mathcal 
A}(E)$ on $X$ associated to $E$ using $\{\lambda_i\}_{i=1}^n$.

For any analytic open subset $U\, \subset\, X$, consider all pairs
of the form $(h_U\, ,D_U)$, where $h_U$ is a holomorphic function on $U$, and
$$
D_U\, :\, E\vert_U\, \longrightarrow\, (E\otimes K_X\otimes {\mathcal 
O}_X(\Delta))\vert_U
$$
is a holomorphic differential operator satisfying the 
identity
$$
D_U(f\cdot s)\, =\, f\cdot D_U(s) + h_U\cdot d(f)\otimes s\, ,
$$
where $f$ is any locally defined holomorphic function on $X$ and $s$ is any 
locally defined holomorphic section of $E$. The above identity implies that the 
order of $D_U$ is at most one; the order of $D_U$ is one 
if and only if $h_U$ is not identically zero. By the Poincar\'e adjunction 
formula, the fiber of the line bundle $\mathcal{O}_X(\Delta)$ over any $x_i\, 
\in\, \Delta$ is identified to the tangent space $T_{x_i}X$. Therefore, the fiber
of $K_X\otimes {\mathcal O}_X(\Delta)$ over any $x_i\, \in\, 
\Delta$ is canonically identified with $\mathbb C$. For any 
$(h_U\, ,D_U)$ as above, and any $x_i\, \in\, \Delta\cap U$, the composition
\begin{equation}\label{zh3}
E\vert_U\, \stackrel{D_U}{\longrightarrow}\, (E\otimes K_X\otimes {\mathcal
O}_X(\Delta))\vert_U\, \stackrel{\text{ev}_{x_i}}{\longrightarrow}\,E_{x_i}\, ,
\end{equation}
where $\text{ev}_{x_i}$ is the evaluation at $x_i$, is ${\mathcal 
O}_U$--linear. Let ${\mathcal A}(E)\vert_U$ be the space of all pairs
$(h_U\, ,D_U)$ of the above type such that the composition in \eqref{zh3}
is multiplication by $h_U(x_i)\cdot \lambda_i$ for all $x_i\, \in\,
\Delta\cap U$. This ${\mathcal A}(E)\vert_U$ is an ${\mathcal O}_U$--module. 
Indeed,
$$
f\cdot (h_U\, ,D_U)\,=\, (f\cdot h_U\, ,f\cdot D_U)\, .
$$
It is straight-forward to check that the map $U\, 
\longmapsto\, {\mathcal A}(E)\vert_U$ defines a torsionfree coherent analytic 
sheaf on $X$. The holomorphic vector bundle on $X$ defined by this
torsionfree coherent analytic sheaf will be denoted by ${\mathcal A}(E)$.
Take any $(h_U\, ,D_U)\, \in\, {\mathcal A}(E)\vert_U$. If $h_U\,=\,0$,
then $D_U$ is evidently a holomorphic section of $\mathrm{End}(E)\otimes K_X$ over $U$.
Hence $\mathrm{End}(E)\otimes K_X$ is a subbundle of ${\mathcal A}(E)$. Therefore,
we have a short exact sequence of holomorphic vector bundles on $X$
\begin{equation}\label{zh2}
0\, \longrightarrow\, \mathrm{End}(E)\otimes K_X\, \longrightarrow\, {\mathcal A}(E)
\, \stackrel{\nu}{\longrightarrow}\, {\mathcal O}_X \, \longrightarrow\, 0\, ,
\end{equation}
where $\nu$ sends any $(h_U\, ,D_U)$ to $h_U$.

It now suffices to prove that this exact sequence splits. Indeed, 
it is straight-forward to see that a logarithmic connection $D$ on $E$ 
singular over $\Delta$ with residue
$$
{\rm Res}(D,x_i) \,=\, \lambda_i\cdot {\rm Id}_{E_{x_i}}
$$
for all $x_i\, \in\, \Delta$ is a homomorphism of holomorphic vector bundles
$$
D\, :\, {\mathcal O}_X\, \longrightarrow\,{\mathcal A}(E)
$$
such that $\nu\circ D\,=\, \text{Id}_{{\mathcal O}_X}$, where $\nu$ is the
homomorphism in \eqref{zh2}.

The vector bundle $E$ is a direct sum of indecomposable vector bundles. 
Therefore, it is enough to prove the converse under the
assumption that $E$ is indecomposable. So assume that $E$ is indecomposable.

The obstruction to a holomorphic splitting of the exact sequence in
\eqref{zh2} is a cohomology class
\begin{equation}\label{t}
\theta\, \in\,\mathrm{H}^1(X,\, \mathrm{End}(E)\otimes K_X)\,=\, 
\mathrm{H}^0(X,\, \mathrm{End}(E))^*
\end{equation}
(Serre duality).

Since $E$ is indecomposable, all holomorphic endomorphisms of $E$ are
of the form $c\cdot \text{Id}_E +N$, where $c\, \in\, \mathbb C$ and
$N$ is a nilpotent endomorphism \cite[p. 201, Proposition 16]{At}. It
can be shown that
\begin{equation}\label{id}
\theta(\text{Id}_E)\,=\,0
\end{equation}
(see \cite[p. 202, Proposition 18(i)]{At}). To prove \eqref{id}, consider the 
line bundle ${\mathcal L}\, :=\, \bigwedge^{{\rm rank}(E)} E$. Let
${\mathcal A}({\mathcal L})$ be the vector bundle on $X$ constructed exactly
as ${\mathcal A}(E)$ by replacing $E$ by $\mathcal L$ and replacing
each $\lambda_i$ by ${\rm rank}(E)\cdot \lambda_i$. Let
$$
0\, \longrightarrow\, K_X\, \longrightarrow\, {\mathcal A}({\mathcal L})
\, \longrightarrow\, {\mathcal O}_X \, \longrightarrow\, 0
$$
be the exact sequence constructed as in \eqref{zh2}, and let
$$
\theta_0\, \in\, \mathrm{H}^1(X,\, K_X)\,=\, \mathrm{H}^0(X,\,{\mathcal O}_X)^* \,=\, \mathbb C
$$
be the obstruction to its splitting (as defined in \eqref{t}). Then,
\begin{equation}\label{id2}
\theta(\text{Id}_E)\,=\, \theta_0(1)\, .
\end{equation}
Since ${\rm degree}(E)+ {\rm rank}(E)\cdot \sum_{i=1}^n \lambda_i\,=\, 0$, the
line bundle ${\mathcal L}$ has a logarithmic connection $D_0$ singular over
$\Delta$ with residue
$$
{\rm Res}(D_0,x_i) \,=\, {\rm rank}(E)\cdot \lambda_i
$$
for every $x_i\, \in\, \Delta$. But this implies that $\theta_0\,=\, 0$.
Hence \eqref{id} follows from \eqref{id2}.

The proof of Proposition 18(ii) in \cite{At} yields that
$\theta(N)\,=\, 0$ for any nilpotent endomorphism of $E$. Hence
$\theta\,=\, 0$. Therefore, the short exact sequence in \eqref{zh2}
splits holomorphically. Consequently, $E$ admits a logarithmic connection $D$ 
singular over $\Delta$ such that ${\rm Res}(D,x_i) \,=\, \lambda_i\cdot {\rm 
Id}_{E_{x_i}}$ for all $x_i\, \in\, \Delta$. 
\end{proof}



\begin{thebibliography}{ZZZ}

\bibitem[At]{At} M. F. Atiyah, Complex analytic connections in fibre bundles, 
\textit{Trans. Amer. Math. Soc.} \textbf{85} (1957), 181--207.

\bibitem[Bi1]{Bi3} I. Biswas, A criterion for the existence of a flat 
connection on a parabolic vector bundle, \textit{Adv. Geom.} \textbf{2} 
(2002), 231--241.

\bibitem[Bi2]{Bi} I. Biswas, Parabolic bundles as orbifold bundles,
\textit{Duke Math. Jour.} \textbf{88} (1997), 305--325.

\bibitem[BP]{BP} I. Biswas and M. Poddar, Chen--Ruan cohomology of some moduli 
spaces, II, \textit{Internat. Jour. Math.} \textbf{21} (2010), 597--522.

\bibitem[HT]{HT} T. Hausel and M. Thaddeus, Generators for the cohomology ring 
of the moduli space of rank $2$ Higgs bundles, \textit{Proc. London Math. Soc.} 
\textbf{88} (2004), 632--658.

\bibitem[Ma]{Ma} E. Markman, Generators of the cohomology ring of moduli 
spaces of sheaves on symplectic surfaces, \textit{Jour. Reine Angew. Math.}
\textbf{544} (2002), 61--82.

\bibitem[Na]{Na} M. Namba, {\it Branched coverings and algebraic functions},
Pitman Research Notes in Mathematics Series, Vol. 161, Longman Scientific \& 
Technical, Harlow, 1987.

\bibitem[Ni]{Ni} N. Nitsure, Moduli of semistable logarithmic connections, 
\textit{Jour. Amer. Math. Soc.} \textbf{6} (1993), 597--609.

\bibitem[Oh]{Oh} M. Ohtsuki, A residue formula for Chern classes associated 
with logarithmic connections, \textit{Tokyo Jour. Math.} \textbf{5} (1982), 
13--21.

\bibitem[Si1]{Si1} C. T. Simpson, Higgs bundles and local systems,  
\textit{Inst. Hautes \'Etudes Sci. Publ. Math.} \textbf{75} (1992), 5--95.

\bibitem[Si2]{Si} C. T. Simpson, Moduli of representations of
the fundamental group of a smooth projective variety. I, \textit{Inst.
Hautes \'Etudes Sci. Publ. Math.} \textbf{79} (1994), 47--129.

\bibitem[Si3]{Si2} C. T. Simpson, Moduli of representations of the fundamental 
group of a smooth projective variety. II, \textit{Inst. Hautes \'Etudes Sci. 
Publ. Math.} \textbf{80} (1994) 5--79.

\bibitem[We]{We} A. Weil, Generalisation des fonctions abeliennes,
\textit{Jour. Math. Pure Appl.} \textbf{17} (1938), 47--87.

\end{thebibliography}
\end{document}